\numberwithin{theorem}{section}
\newcommand{\TheTitle}{Ordering positive definite matrices} 
\newcommand{\TheAuthors}{C. Mostajeran, and R. Sepulchre}
\headers{\TheTitle}{\TheAuthors}
\title{{\TheTitle}\thanks{
This work was funded by the Engineering and Physical Sciences Research Council (EPSRC) of the United Kingdom, as well as the European Research Council under the Advanced ERC Grant Agreement Switchlet n.670645.}}
\author{
  Cyrus Mostajeran\thanks{Department of Engineering, University of Cambridge, United Kingdom (\email{csm54@cam.ac.uk}, \email{r.sepulchre@eng.cam.ac.uk}).}
  \and
  Rodolphe Sepulchre\footnotemark[2]
}
\def\tr{\operatorname{tr}}
\newtheorem{remark}{Remark}
\begin{document}

\maketitle

\begin{abstract}
We introduce new partial orders on the set $S^+_n$ of positive definite matrices of dimension $n$ derived from the affine-invariant geometry of $S^+_n$. The orders are induced by affine-invariant cone fields, which arise naturally from a local analysis of the orders that are compatible with the homogeneous geometry of $S^+_n$ defined by the natural transitive action of the general linear group $GL(n)$. We then take a geometric approach to the study of monotone functions on $S^+_n$ and establish a number of relevant results, including an extension of the well-known L\"owner-Heinz theorem derived using differential positivity with respect to affine-invariant cone fields. 
\end{abstract}  

\begin{keywords}
Positive Definite Matrices, Partial Orders, Monotone Functions, Monotone Flows, Differential Positivity, Matrix Means
\end{keywords}

\begin{AMS}
  15B48, 34C12, 37C65, 47H05
\end{AMS}

\section{Introduction}

Well-defined notions of ordering of elements of a space are of fundamental importance to many areas of applied mathematics, including the theory of monotone functions and matrix means in which orders play a defining role \cite{Lowner1934,Heinz1951,Ando1979, Kubo1979}. Partial orders play a key part in a wide variety of applications across information geometry where one is interested in performing statistical analysis on sets of matrices. In such applications, the choice of order relation is often taken for granted. This choice, however, is of crucial significance since a function that is not monotone with respect to one order, may be monotone with respect to another.

We outline a geometric approach to systematically generate orders on homogeneous spaces. A homogeneous space is a manifold that admits a transitive action by a Lie group, in the sense that any two points on the manifold can be mapped onto each other by elements of a group of transformations that act on the space. The observation that cone fields induce conal orders on continuous spaces, combined with the geometry of homogeneous spaces forms the basis of the approach taken in this paper. The aim is to generate cone fields that are invariant with respect to the homogeneous geometry, thereby defining partial orders built upon the underlying symmetries of the space. A smooth cone field on a manifold is often also referred to as a causal structure.  The geometry of invariant cone fields and causal structures on homogeneous spaces has been the subject of extensive studies from a Lie theoretic perspective; see \cite{Neeb1991,Hilgert1997,Hilgert1989}, for instance. Causal structures induced by quadratic cone fields on manifolds also play a fundamental role in mathematical physics, in particular within the theory of general relativity \cite{Segal1976}.

The focus of this paper is on ordering the elements of the set of symmetric positive definite matrices $S^+_n$ of dimension $n$. Positive definite matrices arise in numerous applications, including as covariance matrices in statistics and computer vision, as variables in convex and semidefinite programming, as unknowns in fundamental problems in systems and control theory, as kernels in machine learning, and as diffusion tensors in medical imaging. The space $S^+_n$ forms a smooth manifold that can be viewed as a homogeneous space admitting a transitive action by the general linear group $GL(n)$, which endows the space with an affine-invariant geometry as reviewed in Section \ref{section 2}. In Section \ref{section 3}, this geometry is used to construct affine-invariant cone fields and new partial orders on $S^+_n$. In Section \ref{section 4}, we discuss how differential positivity \cite{Forni2015} can be used to study and characterize monotonicity on $S^+_n$ with respect to the invariant orders introduced in this paper. We also state and prove a generalized version of the celebrated L\"owner-Heinz theorem  \cite{Lowner1934,Heinz1951} of operator monotonicity theory derived using this approach. In Section \ref{half spaces}, we consider preorder relations induced by affine-invariant and translation-invariant half-spaces on $S^+_n$, and provide examples of functions and flows that preserve such structures. Finally, in Section \ref{matrix means}, we review the notion of matrix means and establish a connection between the geometric mean and affine-invariant cone fields on $S^+_n$.

\section{Homogeneous geometry of $S^+_n$}  \label{section 2}

The set $S^+_n$ of symmetric positive definite matrices of dimension $n$ has the structure of a homogeneous space with a transitive $GL(n)$-action. 
The transitive action of $GL(n)$ on $S^+_n$ is given by congruence transformations of the form
\begin{equation}
\tau_A: \Sigma \mapsto A\Sigma A^T \quad \forall A\in GL(n), \ \forall \Sigma \in S^+_n.
\end{equation}
Specifically, if $\Sigma_1,\Sigma_2\in S^+_n$, then $\tau_A$ with $A={\Sigma_2^{1/2}\Sigma_1^{-1/2}}\in GL(n)$  maps $\Sigma_1$ onto $\Sigma_2$, where $\Sigma^{1/2}$ denotes the unique positive definite square root of $\Sigma$. This action is said to be \emph{almost effective} in the sense that $\pm I$ are the only elements of $GL(n)$ that fix every $\Sigma \in S^+_n$. The isotropy group of this action at $\Sigma = I$ is precisely the orthogonal group $O(n)$, since $\tau_Q: I \mapsto QIQ^T=I$ if and only if $Q\in O(n)$. Thus, we can identify any $\Sigma\in S^+_n$ with an element of the quotient space $GL(n)/O(n)$. That is
\begin{equation}  \label{quotient}
S^+_n\cong GL(n)/O(n).
\end{equation}
The identification in (\ref{quotient}) can also be made by noting that $\Sigma\in S^+_n$ admits a Cholesky decomposition
$\Sigma = CC^T$ for some $C\in GL(n)$.
The Cauchy polar decomposition of the invertible matrix $C$ yields a unique decomposition $C=PQ$ of $C$ into an orthogonal matrix $Q\in O(n)$ and a symmetric positive definite matrix $P\in S^n_+$. Now note that if $\Sigma$ has Cholesky decomposition $\Sigma = CC^T$ and $C$ has a Cauchy polar decomposition $C=PQ$, then $\Sigma=PQQ^TP=P^2$. That is, $\Sigma$ is invariant with respect to the orthogonal part $Q$ of the polar decomposition. Therefore, we can identify any $\Sigma\in S^+_n$ with the equivalence class $[\Sigma^{1/2}]=\Sigma^{1/2}\cdot O(n)$ in the quotient space $GL(n)/O(n)$.

Recall that the Lie algebra $\mathfrak{gl}(n)$ of $GL(n)$ consists of the set $\mathbb{R}^{n\times n}$ of all real $n\times n$ matrices
equipped with the Lie bracket $[X,Y]=XY-YX$, while the Lie algebra of $O(n)$ is $\mathfrak{o}(n)=\{X\in \mathbb{R}^{n\times n}: X^T=-X\}$. Since any matrix $X\in \mathbb{R}^{n\times n}$ has a unique decomposition 
$
X=\frac{1}{2}(X-X^T) + \frac{1}{2}(X+X^T)$,
as a sum of an antisymmetric part and a symmetric part, we have $\mathfrak{gl}(n)=\mathfrak{o}(n)\oplus\mathfrak{m}$, where $\mathfrak{m}=\{X\in\mathbb{R}^{n\times n}: X^T=X\}$. Furthermore, since $\operatorname{Ad}_Q(S)=QSQ^{-1}=QSQ^T$ is a symmetric matrix for each $S\in\mathfrak{m}$, we have
\begin{equation}
\operatorname{Ad}_{O(n)}\mathfrak{m}\subseteq\mathfrak{m},
\end{equation}
which shows that 
$S^+_n=GL(n)/O(n)$ is in fact a reductive homogeneous space with reductive decomposition $\mathfrak{gl}(n)=\mathfrak{o}(n)\oplus\mathfrak{m}$. Also, note that since $(XY-YX)^T=Y^TX^T-X^TY^T$, we have
$[\mathfrak{o}(n),\mathfrak{o}(n)]\subseteq\mathfrak{o}(n)$, $[\mathfrak{m},\mathfrak{m}]\subseteq \mathfrak{o}(n)$, and $ [\mathfrak{o}(n),\mathfrak{m}]\subseteq\mathfrak{m}$.
The tangent space $T_oS^+_n$ of $S^+_n$ at the base-point $o=[I]=I\cdot O(n)$ is identified with $\mathfrak{m}$. For each $\Sigma \in S^+_n$, the action $\tau_{\Sigma^{1/2}}:S^+_n\to S^+_n$ induces the vector space isomorphism $d\tau_{\Sigma^{1/2}}\vert_I: T_I S^+_n \to T_{\Sigma} S^+_n$ given by
\begin{equation}   \label{isomorphism}
 d\tau_{\Sigma^{1/2}}\big\vert_I X = \Sigma^{1/2}X\Sigma^{1/2}, \quad \forall X\in \mathfrak{m}.
\end{equation}
The map (\ref{isomorphism}) can be used to extend structures defined in $T_o S^+_n$ to structures defined on the tangent bundle $T S^+_n$ through affine-invariance, provided that the structures in $T_o S^+_n$  are $\operatorname{Ad}_{O(n)}$-invariant. The $\operatorname{Ad}_{O(n)}$-invariance is required to ensure that the extension to $T S^+_n$ is unique and thus well-defined. For instance, any homogeneous Riemannian metric on $S^+_n\cong GL(n)/O(n)$ is determined by an $\operatorname{Ad}_{O(n)}$-invariant inner product on $\mathfrak{m}$. Any such inner product induces a norm that is rotationally invariant and so can only depend on the scalar invariants $\operatorname{tr}(X^k)$ where $k \geq 1$ and $X\in\mathfrak{m}$. Moreover, as the inner product is a quadratic function, $\|X\|^2$ must be a linear combination of $(\operatorname{tr}(X))^2$ and $\operatorname{tr}(X^2)$. Thus, any  $\operatorname{Ad}_{O(n)}$-invariant inner product on $\mathfrak{m}$ must be a scalar multiple of 
\begin{equation}
\langle X, Y \rangle_{\mathfrak{m}} = \operatorname{tr}(XY)+\mu \operatorname{tr}(X)\operatorname{tr}(Y),
\end{equation}
where $\mu$ is a scalar parameter with $\mu > - 1/n$ to ensure positive-definiteness \cite{Pennec2006}. Therefore, the corresponding affine-invariant Riemannian metrics are generated by (\ref{isomorphism}) and given by
\begin{eqnarray}  \label{metrics}
\langle X, Y \rangle_{\Sigma} &= &\langle \Sigma^{-1/2}X\Sigma^{-1/2}, \Sigma^{-1/2}Y \Sigma^{-1/2}\rangle_{\mathfrak{m}} \nonumber  \\
& = & \operatorname{tr}(\Sigma^{-1}X\Sigma^{-1}Y)+\mu \operatorname{tr}(\Sigma^{-1}X)\operatorname{tr}(\Sigma^{-1}Y),
\end{eqnarray}
for $\Sigma \in S^+_n$ and $X,Y\in T_{\Sigma}S^+_n$. In the case $\mu = 0$, (\ref{metrics}) yields the most commonly used `natural' Riemannian metric on $S^+_n$, which corresponds to the Fisher information metric for the multivariate normal distribution \cite{Burbea1982,Skovgaard}, and has been widely used in applications such as tensor computing in medical imaging \cite{Barbaresco2008}.

\section{Affine-invariant orders}  \label{section 3}

\subsection{Affine-invariant cone fields}

A cone field $\mathcal{K}$ on $S^+_n$ smoothly assigns a cone $\mathcal{K}(\Sigma)\subset T_{\Sigma}S^+_n$ to each point $\Sigma \in S^+_n$. In this paper, we consider a cone to be a solid and pointed subset of a vector space that is closed under linear combinations with positive coefficients.
We say that $\mathcal{K}$ is affine-invariant or homogeneous with respect to the quotient geometry $S^+_n\cong GL(n)/O(n)$ if 
\begin{equation}
d\tau_A\big\vert_{\Sigma}\mathcal{K}(\Sigma)=\mathcal{K}(\tau_A(\Sigma)),
\end{equation}
for all $\Sigma\in S^+_n$ and $A\in GL(n)$. The procedure we will use for constructing affine-invariant cone fields on $S^+_n$ is similar to the approach taken for generating the affine-invariant Riemannian metrics in Section \ref{section 2}. We begin by defining a cone $\mathcal{K}(I)$ at $I$ that is $\operatorname{Ad}_{O(n)}$-invariant:
\begin{equation}   \label{Ad}
X \in \mathcal{K}(I) \Longleftrightarrow  \operatorname{Ad}_Q X = d\tau_Q\big\vert_I X = QXQ^T \in \mathcal{K}(I), \, \forall Q\in O(n).
\end{equation}
Using such a cone, we generate a cone field via 
\begin{equation}
\mathcal{K}(\Sigma)= d\tau_{\Sigma^{1/2}}\big\vert_I\mathcal{K}(I) =
\{X\in T_{\Sigma}S^+_n: \Sigma^{-1/2}X\Sigma^{-1/2}\in \mathcal{K}(I)\}.
\end{equation}
The $\operatorname{Ad}_{O(n)}$-invariance condition (\ref{Ad}) is satisfied if $\mathcal{K}(I)$ has a spectral characterization; that is, we can check to see if any given $X\in T_{I}S^+_n\cong \mathfrak{m}$ lies in $\mathcal{K}(I)$ using only properties of $X$ that are characterized by its spectrum. This observation leads to the following result.

\begin{proposition} \label{spectral}
A cone $\mathcal{K}(I)\in T_{I}S^+_n$ is $\operatorname{Ad}_{O(n)}$-invariant
if and only if there exists a cone $\mathcal{K}_{\Lambda}\subset\mathbb{R}^n$ that satisfies
\begin{equation} \label{permutation}
\boldsymbol{\lambda}\in\mathcal{K}_{\Lambda} \quad \Longleftrightarrow \quad P\boldsymbol{\lambda}\in\mathcal{K}_{\Lambda},
\end{equation}
for all permutation matrices $P\in \mathbb{R}^{n\times n}$, such that
$X\in \mathcal{K}(I)$ whenever $\boldsymbol{\lambda}_X\in\mathcal{K}_{\Lambda}$,
where $\boldsymbol{\lambda}_X=(\lambda_i(X))$ is a vector consisting of the  $n$ real eigenvalues of the symmetric matrix $X$.
\end{proposition}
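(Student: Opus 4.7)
The plan is to exploit the spectral theorem for symmetric matrices, which reduces the $\operatorname{Ad}_{O(n)}$-orbit structure on $T_I S^+_n \cong \mathfrak{m}$ to combinatorics on eigenvalue tuples. Every $X \in \mathfrak{m}$ admits a decomposition $X = Q \diag(\boldsymbol{\lambda}_X) Q^T$ with $Q \in O(n)$, so each $\operatorname{Ad}_{O(n)}$-orbit in $\mathfrak{m}$ meets the diagonal subspace, and two diagonal matrices lie in the same orbit precisely when their diagonals agree as multisets. Since every permutation matrix $P$ lies in $O(n)$ and satisfies $P\diag(\boldsymbol{\lambda})P^T = \diag(P\boldsymbol{\lambda})$, this equivalence relation on diagonals is exactly $\boldsymbol{\lambda} \sim P\boldsymbol{\lambda}$.

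For the forward direction I would define $\mathcal{K}_\Lambda := \{\boldsymbol{\lambda} \in \mathbb{R}^n : \diag(\boldsymbol{\lambda}) \in \mathcal{K}(I)\}$, which is a cone in $\mathbb{R}^n$ as the preimage of the cone $\mathcal{K}(I)$ under the linear map $\boldsymbol{\lambda} \mapsto \diag(\boldsymbol{\lambda})$. Property (\ref{permutation}) follows immediately by applying $\operatorname{Ad}_{O(n)}$-invariance with $P \in O(n)$ a permutation matrix, using $P\diag(\boldsymbol{\lambda})P^T = \diag(P\boldsymbol{\lambda})$. The spectral characterisation is then obtained by diagonalising any $X \in \mathfrak{m}$ as $X = Q\diag(\boldsymbol{\lambda}_X) Q^T$ and invoking $\operatorname{Ad}_Q$-invariance of $\mathcal{K}(I)$: we get $X \in \mathcal{K}(I)$ iff $\diag(\boldsymbol{\lambda}_X) \in \mathcal{K}(I)$ iff $\boldsymbol{\lambda}_X \in \mathcal{K}_\Lambda$. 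The converse implication is then straightforward: once $\mathcal{K}(I)$ is characterised by membership of the eigenvalue vector in $\mathcal{K}_\Lambda$, $\operatorname{Ad}_{O(n)}$-invariance follows because orthogonal congruence leaves the spectrum unchanged as a multiset, so $\boldsymbol{\lambda}_X$ and $\boldsymbol{\lambda}_{QXQ^T}$ differ only by a permutation, and permutation invariance of $\mathcal{K}_\Lambda$ closes the argument.

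The main subtlety---really the only delicate point---is that $\boldsymbol{\lambda}_X$ has no canonical ordering, so the predicate ``$\boldsymbol{\lambda}_X \in \mathcal{K}_\Lambda$'' is a priori ambiguous as a condition on $X$. The permutation-invariance hypothesis (\ref{permutation}) on $\mathcal{K}_\Lambda$ is precisely what renders this predicate well-defined, and it is the same condition that translates between the combinatorial action on eigenvalue vectors and the $\operatorname{Ad}_{O(n)}$-action on matrices. Once this observation is isolated, both implications reduce to a clean application of the spectral theorem.
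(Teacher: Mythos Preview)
The paper states this proposition without proof, so there is nothing to compare against directly; your argument via the spectral theorem is exactly the natural one and is correct in both directions.

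One small point worth tightening: you assert that $\mathcal{K}_\Lambda := \{\boldsymbol{\lambda}\in\mathbb{R}^n:\diag(\boldsymbol{\lambda})\in\mathcal{K}(I)\}$ is a cone ``as the preimage of the cone $\mathcal{K}(I)$ under the linear map $\boldsymbol{\lambda}\mapsto\diag(\boldsymbol{\lambda})$.'' Pointedness and closure under positive combinations follow immediately this way, but solidity in $\mathbb{R}^n$ does not: the intersection of a solid cone with a linear subspace need not have interior in that subspace. Here it does, but you need $\operatorname{Ad}_{O(n)}$-invariance to see it: take any interior point $X_0\in\mathcal{K}(I)$, diagonalise it as $X_0=Q\diag(\boldsymbol{\lambda}_0)Q^T$, and observe that $\operatorname{Ad}_{Q^T}$ is a linear automorphism of $\mathcal{K}(I)$ and hence carries $X_0$ to a \emph{diagonal} interior point $\diag(\boldsymbol{\lambda}_0)$, giving an open neighbourhood of $\boldsymbol{\lambda}_0$ in $\mathcal{K}_\Lambda$. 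With that addition your proof is complete.
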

For instance, $\operatorname{tr}(X)$ and $\operatorname{tr}(X^2)$ are both functions of $X$ that are spectrally characterized and indeed $\operatorname{Ad}_{O(n)}$-invariant. Quadratic  $\operatorname{Ad}_{O(n)}$-invariant cones are defined by inequalities on suitable linear combinations of  $(\operatorname{tr}(X))^2$ and $\operatorname{tr}(X^2)$.

\begin{proposition} 
For any choice of parameter $\mu \in (0,n)$, the set
\begin{equation}  \label{quad}
\mathcal{K}(I)=\{X\in T_IS^+_n:(\operatorname{tr}(X))^2-\mu \operatorname{tr}(X^2) \geq 0, \ \operatorname{tr}(X) \geq 0\},
\end{equation}
defines an $\operatorname{Ad}_{O(n)}$-invariant cone in $T_{I}S^+_n=\{X\in\mathbb{R}^{n\times n}: X^T=X\}$.
\end{proposition}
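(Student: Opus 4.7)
The plan is to verify in sequence that $\mathcal{K}(I)$ is $\operatorname{Ad}_{O(n)}$-invariant and that it is a cone (closed under positive combinations, pointed, solid), with the bulk of the work in convexity. The invariance is immediate from Proposition \ref{spectral}, since both $\operatorname{tr}(X)$ and $\operatorname{tr}(X^2)$ are symmetric functions of the eigenvalues of $X$. Closure under positive scaling is homogeneity of the two defining expressions (of degrees $2$ and $1$). Pointedness follows by noting that $X,-X\in\mathcal{K}(I)$ forces $\operatorname{tr}(X)=0$, whence $(\operatorname{tr} X)^2-\mu\operatorname{tr}(X^2)\geq 0$ reduces to $-\mu\operatorname{tr}(X^2)\geq 0$; with $\mu>0$ and $\operatorname{tr}(X^2)=\sum_i \lambda_i(X)^2\geq 0$, this gives $X=0$. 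Solidness is witnessed by $I$ itself: $(\operatorname{tr} I)^2-\mu\operatorname{tr}(I^2)=n(n-\mu)>0$ and $\operatorname{tr}(I)=n>0$ hold strictly, so $I$ lies in the interior.

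The core step is convexity, which I would handle by reducing to a Lorentz cone. Write $X = \tfrac{\operatorname{tr}(X)}{n} I + X_0$ with $\operatorname{tr}(X_0) = 0$; since $I \perp X_0$ in the trace inner product, $\operatorname{tr}(X^2) = \tfrac{(\operatorname{tr} X)^2}{n} + \operatorname{tr}(X_0^2)$. Setting $t = \operatorname{tr}(X)$ and $s = \|X_0\| := \sqrt{\operatorname{tr}(X_0^2)}$, the defining inequalities collapse to
\[
  t\geq 0 \qquad\text{and}\qquad \Bigl(1-\tfrac{\mu}{n}\Bigr) t^2 \geq \mu\, s^2, \qquad\text{i.e.}\qquad t\geq \sqrt{\tfrac{\mu n}{n-\mu}}\;\|X_0\|.
\]
This describes a Lorentz (``ice-cream'') cone in the coordinates $(t,X_0)\in\mathbb{R}\times\mathfrak{m}_0$, where $\mathfrak{m}_0$ denotes the subspace of traceless symmetric matrices equipped with the Euclidean norm $\|\cdot\|$; convexity is then the standard triangle-inequality argument: if $t_i \geq C\|(X_i)_0\|$ for $i=1,2$ with $C=\sqrt{\mu n/(n-\mu)}$, then $t_1+t_2 \geq C(\|(X_1)_0\|+\|(X_2)_0\|) \geq C\|(X_1+X_2)_0\|$. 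The hypothesis $\mu\in(0,n)$ enters essentially at this point: $\mu<n$ makes $(1-\mu/n)$ positive, so that the quadratic form $(\operatorname{tr} X)^2-\mu\operatorname{tr}(X^2)$ has Lorentzian signature $(1,\dim\mathfrak{m}-1)$ on $\mathfrak{m}$, while $\mu>0$ makes the bound on $t$ non-trivial.

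The main obstacle is recognizing the correct geometric picture for the convexity step. Once the Lorentz-cone reduction is performed, the convexity argument reduces to the triangle inequality, and the remaining cone properties and the $\operatorname{Ad}_{O(n)}$-invariance are immediate verifications from the spectral characterization in Proposition \ref{spectral}.
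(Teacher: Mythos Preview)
Your proof is correct but differs from the paper's in the convexity step. The paper argues directly: expanding $(\tr(X_1+X_2))^2-\mu\tr((X_1+X_2)^2)$ leaves the cross term $2[\tr(X_1)\tr(X_2)-\mu\tr(X_1X_2)]$, which is nonnegative by Cauchy--Schwarz, since $\tr(X_1X_2)\leq(\tr(X_1^2))^{1/2}(\tr(X_2^2))^{1/2}\leq\tfrac{1}{\mu}\tr(X_1)\tr(X_2)$. Your Lorentz-cone reduction via the decomposition $X=\tfrac{\tr X}{n}I+X_0$ is a genuinely different route: it replaces the Cauchy--Schwarz step with the triangle inequality for $\|\cdot\|$ on $\mathfrak{m}_0$, and in doing so it anticipates the pseudo-Riemannian picture developed in Section~\ref{pseudo}, where the same decomposition is used to identify these cones as precisely the causal cones of the affine-invariant Lorentzian metrics. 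The paper's argument is slightly more self-contained at this point in the exposition; yours is more conceptual and explains \emph{why} the set is a cone rather than merely verifying it. You also explicitly check solidness (via $I\in\operatorname{int}\mathcal{K}(I)$), which the paper's proof omits despite solidness being part of its stated definition of a cone.
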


\begin{proof}
$\operatorname{Ad}_{O(n)}$-invariance is clear since $\operatorname{tr}(X^2)=\operatorname{tr}(QXQ^TQXQ^T)$ and $\operatorname{tr}(X)=\operatorname{tr}(QXQ^T)$ for all $Q\in O(n)$. To prove that (\ref{quad}) is a cone, first note that $0\in \mathcal{K}(I)$ and for $\lambda>0$, $X\in\mathcal{K}(I)$, we have $\lambda X \in \mathcal{K}(I)$ since $\operatorname{tr}(\lambda X)=\lambda \operatorname{tr}(X)\geq 0$ and
\begin{equation}
(\operatorname{tr}(\lambda X))^2-\mu \operatorname{tr}((\lambda X)^2)=\lambda^2[(\operatorname{tr}(X))^2-\mu \operatorname{tr}(X^2)] \geq 0.
\end{equation}
To show convexity, let $X_1, X_2\in\mathcal{K}(I)$. Now $\operatorname{tr}(X_1+X_2)=\operatorname{tr}(X_1)+\operatorname{tr}(X_2)\geq 0$, and
\begin{align}
(\operatorname{tr}(X_1+X_2)&)^2 -\mu \operatorname{tr}((X_1+X_2)^2)  =   [(\operatorname{tr}(X_1))^2-\mu \operatorname{tr}(X_1^2)] \nonumber \\ + \  \  & [(\operatorname{tr}(X_2))^2-\mu \operatorname{tr}(X_2^2)]   + 2 [\operatorname{tr}(X_1)\operatorname{tr}(X_2)-\mu \operatorname{tr}(X_1X_2)] \geq 0,
\end{align}
since 
$\operatorname{tr}(X_1X_2) \leq (\operatorname{tr}(X_1^2))^{\frac{1}{2}}(\operatorname{tr}(X_2^2))^{\frac{1}{2}} \leq \frac{1}{\sqrt{\mu}}\operatorname{tr}(X_1)\frac{1}{\sqrt{\mu}}\operatorname{tr}(X_2)$,  where the first inequality follows by Cauchy-Schwarz. Finally, we need to show that $\mathcal{K}(I)$ is pointed. If $X\in \mathcal{K}(I)$ and $-X \in \mathcal{K}(I)$, then $\operatorname{tr}(-X)=-\operatorname{tr}(X)=0$. Thus, $(\operatorname{tr}(X))^2-\mu \operatorname{tr}(X^2)=-\mu \operatorname{tr}(X^2)\geq 0$, which is possible if and only if all of the eigenvalues of $X$ are zero; i.e., if and only if X = 0.
\end{proof}

The parameter $\mu$ controls the opening angle of the cone. If $\mu = 0$, then (\ref{quad}) defines the half-space $\operatorname{tr}(X)\geq 0$. As $\mu$ increases, the opening angle of the cone becomes smaller and for $\mu = n$  (\ref{quad}) collapses to a ray.
For each $\mu\in (0,n)$, the cone $\mathcal{K}_{\Lambda}=\mathcal{K}_{\Lambda}^{\mu}\subset\mathbb{R}^n$ of Proposition \ref{spectral} is given by
\begin{equation} \label{spectral2}
\mathcal{K}_{\Lambda}^{\mu}=\Bigg\{\boldsymbol{\lambda}=(\lambda_i)\in\mathbb{R}^n:\left(\sum_{i=1}^n\lambda_i\right)^2-\mu\sum_{i=1}^n\lambda_i^2\geq 0,\,  \sum_{i=1}^n\lambda_i\geq 0\Bigg\},
\end{equation}
since $\tr(X)=\sum_{i=1}^n\lambda_i(X)$ and $\tr(X^2)=\sum_{i=1}^n\lambda_i^2(X)$. Indeed $\mathcal{K}_{\Lambda}^{\mu}$ is a quadratic cone 
\begin{equation} \label{spectral3}
\mathcal{K}_{\Lambda}^{\mu}=\{\boldsymbol{\lambda}\in\mathbb{R}^n:\boldsymbol{\lambda}^TQ_{\mu}\boldsymbol{\lambda}\geq 0,  \boldsymbol{1}^T\boldsymbol{\lambda}\,\geq 0\},
\end{equation}
where $\boldsymbol{1}=(1,\cdot\cdot\cdot, 1)^T\in\mathbb{R}^n$, and $Q_{\mu}$ is the $n\times n$ matrix with entries
$(Q_{\mu})_{ii}=
1-\mu$ and  
$(Q_{\mu})_{ij}=1$ for $i\neq j$. 

The dual cone $C^*$ of a subset $C$ of a vector space is a very important notion in convex analysis. For a vector space $\mathcal{V}$ endowed with an inner product $\langle \cdot,\cdot\rangle$, the dual cone can be defined as
$
C^*=\{y\in\mathcal{V}:\langle y,x\rangle\geq 0, \ \forall x\in C\}$. A cone is said to be self-dual if it coincides with its dual cone. It is well-known that the cone of positive semidefinite matrices is self-dual.
The following lemma will be used to characterize the form of the dual cone $(\mathcal{K}_{\Lambda}^{\mu})^*$ for each $\mu\in (0,n)$ with respect to the standard inner product on $\mathbb{R}^n$.

\begin{lemma}
The dual cone of the quadratic cone defined by (\ref{spectral3}) with respect to the standard inner product on $\mathbb{R}^n$ is given by
\begin{equation}
(\mathcal{K}_{\Lambda}^{\mu})^*=\{\boldsymbol{\lambda}\in\mathbb{R}^n:\boldsymbol{\lambda}^TQ_{\mu}^{-1}\boldsymbol{\lambda}\geq 0,  \boldsymbol{1}^T\boldsymbol{\lambda}\;\geq 0\}.
\end{equation}
\end{lemma}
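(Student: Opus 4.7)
The plan is to exploit the rank-one-plus-scalar structure $Q_\mu=\boldsymbol{1}\boldsymbol{1}^T-\mu I$ in order to reduce the dual cone computation to the well-known self-duality of Lorentz (second-order) cones. A quick eigenvalue count shows that $Q_\mu$ has eigenvalue $n-\mu>0$ on $\mathrm{span}(\boldsymbol{1})$ and eigenvalue $-\mu<0$ on $\boldsymbol{1}^\perp$, so for every $\mu\in(0,n)$ the form has Lorentzian signature $(1,n-1)$ and $\mathcal{K}_\Lambda^\mu$ is a quadratic cone of Lorentz type.

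Concretely, I would decompose $\boldsymbol{\lambda}=\alpha u+v$ with $u=\boldsymbol{1}/\sqrt{n}$ and $v\perp u$. A direct computation gives $\boldsymbol{\lambda}^TQ_\mu\boldsymbol{\lambda}=(n-\mu)\alpha^2-\mu\|v\|^2$ and $\boldsymbol{1}^T\boldsymbol{\lambda}=\sqrt{n}\,\alpha$, so in these orthonormal coordinates $\mathcal{K}_\Lambda^\mu$ coincides with the standard Lorentz cone $L_c=\{(\alpha,v):\alpha\ge c\|v\|\}$ for $c=\sqrt{\mu/(n-\mu)}$. The Euclidean dual of $L_c$ is $L_c^{\ast}=L_{1/c}$: the inclusion $L_{1/c}\subseteq L_c^{\ast}$ follows from Cauchy--Schwarz applied to $\langle y,x\rangle=\beta\alpha+\langle w,v\rangle$, and the reverse inclusion follows by testing $\langle y,x\rangle\ge 0$ against the extremal rays $(\|v\|/c,\,v)\in L_c$.

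Translating back with $\alpha=\boldsymbol{1}^T\boldsymbol{\lambda}/\sqrt{n}$ and $\|v\|^2=\|\boldsymbol{\lambda}\|^2-\alpha^2$, the defining condition of $L_{1/c}$ rearranges into $(\boldsymbol{1}^T\boldsymbol{\lambda})^2\ge(n-\mu)\|\boldsymbol{\lambda}\|^2$ together with $\boldsymbol{1}^T\boldsymbol{\lambda}\ge 0$. To match the form stated in the lemma, I would then identify this quadratic inequality with $\boldsymbol{\lambda}^TQ_\mu^{-1}\boldsymbol{\lambda}\ge 0$. The Sherman--Morrison formula applied to $Q_\mu=-\mu I+\boldsymbol{1}\boldsymbol{1}^T$ yields
\[
Q_\mu^{-1}=\frac{1}{\mu(n-\mu)}\bigl[\boldsymbol{1}\boldsymbol{1}^T-(n-\mu)I\bigr],
\]
so that $\mu(n-\mu)\,\boldsymbol{\lambda}^TQ_\mu^{-1}\boldsymbol{\lambda}=(\boldsymbol{1}^T\boldsymbol{\lambda})^2-(n-\mu)\|\boldsymbol{\lambda}\|^2$. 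Since $\mu(n-\mu)>0$ on the admissible range $\mu\in(0,n)$, nonnegativity of the right-hand side is equivalent to $\boldsymbol{\lambda}^TQ_\mu^{-1}\boldsymbol{\lambda}\ge 0$, delivering the claimed description.

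The algebraic computations are routine; the only genuinely substantive point is to verify that the linear constraint $\boldsymbol{1}^T\boldsymbol{\lambda}\ge 0$ picks the same nappe of the quadric on both the primal and dual sides, so that the dual is the forward cone $L_{1/c}$ rather than its opposite $-L_{1/c}$. This is secured by observing that $\boldsymbol{1}$ lies in the interior of $\mathcal{K}_\Lambda^\mu$ (it satisfies both defining inequalities strictly, as $n^2-\mu n=n(n-\mu)>0$), hence also in the interior of the dual, which pins down the correct sign of the linear functional. No serious obstacle is anticipated beyond this orientation check.
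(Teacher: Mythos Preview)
The paper states this lemma without proof, so there is no argument in the paper to compare against. Your proposal is a complete and correct proof: the diagonalisation $\boldsymbol{\lambda}=\alpha\,\boldsymbol{1}/\sqrt{n}+v$ identifies $\mathcal{K}_\Lambda^\mu$ with the Lorentz cone $L_c$ for $c=\sqrt{\mu/(n-\mu)}$, the standard duality $L_c^*=L_{1/c}$ then applies, and the Sherman--Morrison computation of $Q_\mu^{-1}$ converts the resulting inequality back into the stated form. Your orientation check (that $\boldsymbol{1}$ lies in the interior of both cones) correctly disposes of the only subtlety. In fact your argument recovers directly, via $1/c=\sqrt{(n-\mu)/\mu}$, the relation $(\mathcal{K}_\Lambda^\mu)^*=\mathcal{K}_\Lambda^{n-\mu}$ that the paper derives separately from the explicit entries of $Q_\mu^{-1}$.
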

The inverse matrix $Q_{\mu}^{-1}$ is given by
\begin{equation}
(Q_{\mu}^{-1})_{ij}=
\begin{cases}
\frac{\mu-(n-1)}{\mu(n-\mu)} \quad &i=j, \\
\frac{1}{\mu(n-\mu)} \quad &i\neq j.
\end{cases}
\end{equation}
Since $\mu(n-\mu)>0$  and $\mu-(n-1)=1-\mu^*$ where $\mu^*=n-\mu$, we find that
$\boldsymbol{\lambda}^TQ_{\mu}^{-1}\boldsymbol{\lambda}\geq 0$ if and only if $\boldsymbol{\lambda}^TQ_{\mu^*}\boldsymbol{\lambda}\geq 0$. That is,
\begin{equation} \label{dual}
(\mathcal{K}_{\Lambda}^{\mu})^*=\mathcal{K}_{\Lambda}^{n-\mu}.
\end{equation}
We notice of course from (\ref{dual}) that $\operatorname{Ad}_{O(n)}$-invariant cones are generally not self-dual. Indeed, for quadratic $\operatorname{Ad}_{O(n)}$-invariant cones, self-duality is only achieved for $\mu=n/2$.

Now for any fixed $\mu \in (0,n)$, we obtain a unique well-defined affine-invariant cone field given by
\begin{equation}  \label{quadField}
\mathcal{K}(\Sigma)=\{X\in T_{\Sigma}S^+_n:(\operatorname{tr}(\Sigma^{-1}X))^2-\mu \operatorname{tr}(\Sigma^{-1}X\Sigma^{-1}X) \geq 0, \ \operatorname{tr}(\Sigma^{-1}X) \geq 0\}.
\end{equation}
Note that for the value $\mu=0$, (\ref{quadField}) reduces to the affine-invariant half-space field
$\{X\in T_{\Sigma}S^+_n:\tr(\Sigma^{-1}X)\geq 0\}$. At the other extreme, for $\mu=n$, it is easy to show that the set at $I$ is given by the ray $\{X\in T_{I}S^+_n:X=\lambda I, \lambda\geq0\}$. By affine-invariance, (\ref{quadField}) reduces to $\{X\in T_{\Sigma}S^+_n:X=\lambda \Sigma, \lambda\geq0\}$ for $\mu=n$, which describes an affine-invariant field of rays in $S^+_n$. 

It should be noted that of course not all $\operatorname{Ad}_{O(n)}$-invariant cones at $I$ are quadratic.  Indeed, it is possible to construct polyhedral $\operatorname{Ad}_{O(n)}$-invariant cones that arise as the intersections of a collection of spectrally defined half-spaces in $T_{I}S^+_n$. The clearest example of such a construction is the cone of positive semidefinite matrices in $T_{I}S^+_n$, which of course itself has a spectral characterization $\mathcal{K}(I) = \{X\in T_I S^+_n: \lambda_i(X)\geq  0, \; i = 1, \dots, n\}$. 

\subsection{Affine-invariant pseudo-Riemannian structures on $S^+_n$} \label{pseudo}

At this point it is instructive to note the following systematic analysis of all affine-invariant pseudo-Riemannian structures on $S^+_n$ before continuing with our treatment of affine-invariant cone fields. This elegant characterization presents the affine-invariant Riemannian metrics of (\ref{metrics}) and the
quadratic affine-invariant cone fields of (\ref{quadField}) within a unified and rigorous mathematical framework. Recall that a pseudo-Riemannian metric is a generalization of a Riemannian metric in which the metric tensor need not be positive definite, but need only be a non-degenerate, smooth, symmetric  bilinear form. The signature of such a metric tensor is defined as the ordered pair consisting of the number of positive and negative eigenvalues of the real and symmetric matrix of the metric tensor with respect to a basis. Note that the signature of a metric tensor is independent of  the choice of basis by Sylvester's law of inertia. A metric tensor on a smooth manifold $\mathcal{M}$ is called Lorentzian if its signature is $(1,\dim\mathcal{M}-1)$.

The irreducible decomposition of $\mathfrak{m}$ under the $\operatorname{Ad}_{O(n)}$-action is given by $\mathfrak{m}=\mathbb{R}I\oplus\mathfrak{m}_0$, where $\mathfrak{m}_0:=\{X\in\mathfrak{m}:\tr X = 0\}$. According to this decomposition, we have $X=\frac{\tr X}{n}I\oplus\pi(X)$ for any $X\in\mathfrak{m}$, where $\pi(X):=X-\frac{\tr X}{n}I\in\mathfrak{m}_0$.
Denote by $\langle X, Y \rangle_{\mathrm{std}}$ the standard inner product $\tr(XY)$ on $\mathfrak{m}$, and let $\|X\|^2_{\mathrm{std}}:=\langle X,X\rangle_{\operatorname{std}}$ be the corresponding norm. Then we have
\begin{equation}
\tr (X^2) = \|X\|_{\mathrm{std}}^2 = \frac{(\tr X)^2}{n}+\|\pi(X)\|^2_{\mathrm{std}}.
\end{equation}
Now since $\mathfrak{m}_0$ is an irreducible $\operatorname{Ad}_{O(n)}$-module, any $\operatorname{Ad}_{O(n)}$-invariant quadratic form on $\mathfrak{m}_0$ is simply a scalar multiple of $\|\cdot\|^2_{\mathrm{std}}$ by Schur's lemma. Therefore, any $\operatorname{Ad}_{O(n)}$-invariant quadratic form on $\mathfrak{m}$ is of the form
\begin{equation}
Q_{\alpha\beta}(X):=\alpha \frac{(\tr X)^2}{n}+\beta \|\pi(X)\|^2_{\mathrm{std}},
\end{equation}
with $\alpha,\beta\in\mathbb{R}$. Clearly, $Q_{\alpha\beta}$ is positive definite if and only if $\alpha>0$ and $\beta>0$. Moreover, if $\alpha>0$ and $\beta<0$, then $Q_{\alpha\beta}$ is Lorentzian and the set $\{X\in\mathfrak{m}:Q_{\alpha\beta}(X)\geq 0,\, \tr X\geq0\}$ defines a pointed cone. Noting that
\begin{equation}
\tr(XY)+\mu\tr (X)\tr (Y) = \left(\mu +\frac{1}{n}\right)\tr (X) \tr (Y) + \langle \pi(X), \pi(Y) \rangle_{\mathrm{std}},
\end{equation}
for each $X, Y\in \mathfrak{m}$, we confirm that the metrics in (\ref{metrics}) are indeed positive definite if and only if $\mu > -1/n$. Similarly, we find that
\begin{equation}
(\tr X)^2-\mu\tr(X^2)=\frac{n-\mu}{n}(\tr X)^2 - \mu\|\pi(X)\|^2_{\mathrm{std}},
\end{equation}
which is Lorentzian if and only if $0<\mu<n$. Thus, we see that the affine-invariant pseudo-Riemannian structures on $S^+_n$ are essentially either Riemannian or Lorentzian, and the quadratic cone fields in (\ref{quadField}) are precisely the cone fields defined by the affine-invariant Lorentzian metrics. 

\subsection{Affine-invariant partial orders on $S^+_n$}\label{partial orders}

A smooth cone field $\mathcal{K}$ on a manifold $\mathcal{M}$ gives rise to a \emph{conal order} $\prec_{\mathcal{K}}$ on $\mathcal{M}$, defined by $x\prec_{\mathcal{K}} y$ if there exists a (piecewise) smooth curve $\gamma:[0,1]\rightarrow \mathcal{M}$ with $\gamma(0)=x$, $\gamma(1)=y$ and $\gamma'(t)\in\mathcal{K}(\gamma(t))$ whenever the derivative exists. The closure $\leq_{\mathcal{K}}$ of this order is again an order and satisfies $x\leq_{\mathcal{K}} y$ if and only if $y\in\overline{\{z:x\prec_{\mathcal{K}} z\}}$. We say that $\mathcal{M}$ is \emph{globally orderable} if $\leq_{\mathcal{K}}$ is a partial order. Here we will prove that the conal orders induced by affine-invariant cone fields on $S^+_n$ define partial orders. That is, we will show that the conal orders satisfy the antisymmetry property that $\Sigma_1\leq_{\mathcal{K}} \Sigma_2$ and $\Sigma_2\leq_{\mathcal{K}} \Sigma_1$ together imply $\Sigma_1=\Sigma_2$, for any affine-invariant cone field $\mathcal{K}$ on $S^+_n$. In other words, we will prove that there do not exist any non-trivial closed conal curves in $S^+_n$. In the following, we will make use of the preimage theorem \cite{Banyaga2013} given below. Recall that given a smooth map $F:\mathcal{M}\rightarrow\mathcal{N}$ between manifolds, we say that a point $y\in\mathcal{N}$ is a \emph{regular value} of $F$ if for all $x\in F^{-1}(y)$ the map $dF\vert_x:T_x\mathcal{M}\rightarrow T_y\mathcal{N}$ is surjective.

\begin{theorem} [The preimage theorem] \label{preimage}
Let $F:\mathcal{M}\rightarrow\mathcal{N}$ be a smooth map of manifolds, with $\dim\mathcal M=m$ and $\dim\mathcal{N}=n$. If $x\in\mathcal{N}$ is a regular value of $F$, then $F^{-1}(c)$ is a submanifold of $\mathcal{M}$ of dimension $m-n$. Moreover, the tangent space of $F^{-1}(c)$ at $x$ is equal to $\ker(dF\vert_x)$. 
\end{theorem}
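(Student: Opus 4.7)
The plan is to reduce everything to a local statement and then invoke the implicit function theorem (equivalently, the constant-rank theorem) to put $F$ into a simple normal form near each point of the preimage. Fix an arbitrary $x\in F^{-1}(c)$. Because surjectivity of $dF\vert_x$ is an open condition and regularity of $c$ guarantees surjectivity at every point of the preimage, I can choose charts $(U,\varphi)$ around $x$ and $(V,\psi)$ around $c$ such that $\varphi(x)=0$, $\psi(c)=0$, and the local representative $\tilde F=\psi\circ F\circ\varphi^{-1}:\varphi(U)\subset\mathbb{R}^m\to\mathbb{R}^n$ has $d\tilde F\vert_0$ of full rank $n$.

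The first key step is to produce the normal form. After permuting coordinates in $\mathbb{R}^m$ if necessary, I may assume that the first $n$ columns of the Jacobian of $\tilde F$ at the origin form an invertible block. Define $\Phi:\varphi(U)\to\mathbb{R}^m$ by $\Phi(u_1,\dots,u_m)=(\tilde F(u),u_{n+1},\dots,u_m)$. Its Jacobian at the origin is block triangular with the invertible $n\times n$ block on top and the identity below, hence invertible, so the inverse function theorem produces a smooth local inverse $\Phi^{-1}$ defined near the origin. In the new coordinate chart $\Phi\circ\varphi$, the map $F$ becomes the coordinate projection $(y_1,\dots,y_m)\mapsto(y_1,\dots,y_n)$.

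The second step is to read off the two conclusions from this normal form. In the new chart the set $F^{-1}(c)$ corresponds to $\{(0,\dots,0,y_{n+1},\dots,y_m)\}$, which is the graph of a constant function on an open subset of $\mathbb{R}^{m-n}$; these charts cover $F^{-1}(c)$ as $x$ varies, and the transition maps are smooth because they are restrictions of transitions between admissible charts of $\mathcal{M}$. This exhibits $F^{-1}(c)$ as an embedded submanifold of dimension $m-n$. For the tangent-space claim, one inclusion is immediate: any smooth curve $\gamma$ in $F^{-1}(c)$ satisfies $F\circ\gamma\equiv c$, so differentiating gives $dF\vert_x(\gamma'(0))=0$, whence $T_xF^{-1}(c)\subseteq\ker(dF\vert_x)$. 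Equality then follows by a dimension count, since $\dim\ker(dF\vert_x)=m-n=\dim T_xF^{-1}(c)$ by the rank-nullity theorem applied to the surjection $dF\vert_x$.

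No step here is genuinely hard; the only point requiring care is making sure the submanifold charts glue consistently, which is handled by the fact that they are obtained from admissible charts of $\mathcal{M}$ by composition with diffeomorphisms of open subsets of $\mathbb{R}^m$. Since this result is entirely standard, I would actually be inclined simply to cite it from a differential topology textbook such as \cite{Banyaga2013} rather than reproduce the proof in the body of the paper.
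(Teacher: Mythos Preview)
Your proof is correct and follows the standard route via the inverse function theorem to obtain a local projection normal form. However, the paper does not actually prove this theorem at all: it merely states the result and cites \cite{Banyaga2013}, exactly as you yourself suggest in your final paragraph. So there is nothing to compare; your instinct to cite rather than reproduce the argument matches the paper's treatment precisely.
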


Now define $F:S^+_n\rightarrow\mathbb{R}$ by $F(\Sigma)=\det\Sigma$. By Jacobi's formula, the differential of the determinant takes the form
$
d(\det)\vert_{\Sigma}X=\tr\left(\operatorname{adj}(\Sigma)X\right),
$
where $\operatorname{adj}(\Sigma)$ denotes the adjugate of $\Sigma$. That is,
\begin{equation} \label{Jacobi}
dF\vert_{\Sigma}X=(\det\Sigma)\tr\left(\Sigma^{-1}X\right),
\end{equation}
for all $X\in T_{\Sigma}S^+_n$. Note that for $c>0$ and any $\Sigma\in F^{-1}(c)$, we have
$dF\vert_{\Sigma}I=c\tr\left(\Sigma^{-1}\right)>0$, which clearly shows that any $c>0$ is a regular value of $F$. Hence, $F^{-1}(c)$ is a submanifold of codimension $1$ for any choice of $c>0$. Furthermore, as $\operatorname{im}(F) = \mathbb{R}^+=\{c\in\mathbb{R}:c>0\}$, the collection of submanifolds $\{F^{-1}(c)\}_{c>0}$ forms a foliation of $S^+_n$. Since $\det\Sigma>0$ for any $\Sigma\in S^+_n$, (\ref{Jacobi}) implies that $\operatorname{ker}(dF\vert_{\Sigma})=\{X\in T_{\Sigma}S^+_n:\tr(\Sigma^{-1}X)=0\}$. Thus, the tangent spaces to the submanifolds $\{F^{-1}(c)\}_{c>0}$ are described by 
the affine-invariant distribution  $\mathcal{D}_{\Sigma}$ of rank $\dim S^+_n-1 = n(n+1)/2-1$ on $S^+_n$ defined by $\mathcal{D}_{\Sigma}:=\{X\in T_{\Sigma}S^+_n:\tr(\Sigma^{-1}X)=0\}$.

\begin{proposition} \label{det}
If $\gamma:[0,1]\rightarrow S^+_n$ is a non-trivial conal curve with respect to a quadratic affine-invariant cone field $\mathcal{K}$ (\ref{quadField}), then
\begin{equation}
t_2>t_1 \implies \det(\gamma(t_2))>\det(\gamma(t_1)),
\end{equation}
for $t_1,t_2\in [0,1]$.
\end{proposition}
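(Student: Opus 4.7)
The plan is to use the Jacobi formula (\ref{Jacobi}) to relate $(\det\circ\gamma)'$ to the linear inequality defining $\mathcal{K}$, and then to exploit the quadratic (Lorentzian) inequality to upgrade the resulting weak monotonicity to strict monotonicity.

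First I would apply (\ref{Jacobi}) to obtain, at every point where $\gamma$ is differentiable,
$$\frac{d}{dt}\det(\gamma(t))=(\det\gamma(t))\,\operatorname{tr}\bigl(\gamma(t)^{-1}\gamma'(t)\bigr).$$
Since $\gamma'(t)\in\mathcal{K}(\gamma(t))$, the linear inequality in (\ref{quadField}) gives $\operatorname{tr}(\gamma(t)^{-1}\gamma'(t))\geq 0$, and $\det\gamma(t)>0$, so $(\det\circ\gamma)'\geq 0$ wherever defined. Piecewise smoothness together with continuity of $\gamma$ then yield that $\det\circ\gamma$ is non-decreasing on $[0,1]$. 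To upgrade this to strict monotonicity I would argue by contrapositive: suppose $\det\gamma(t_1)=\det\gamma(t_2)$ for some $t_1<t_2$, so that $\det\circ\gamma$ is constant on $[t_1,t_2]$ and hence $\operatorname{tr}(\gamma(t)^{-1}\gamma'(t))=0$ on each smooth piece. The quadratic inequality $(\operatorname{tr}(\gamma^{-1}\gamma'))^2\geq\mu\operatorname{tr}((\gamma^{-1}\gamma')^2)$ combined with $\mu>0$ then forces $\operatorname{tr}((\gamma^{-1}\gamma')^2)\leq 0$. Setting $Y:=\gamma^{-1/2}\gamma'\gamma^{-1/2}$, which is symmetric since $\gamma'(t)\in T_{\gamma(t)}S^+_n$ is symmetric, cyclic invariance of the trace gives
$$\operatorname{tr}((\gamma^{-1}\gamma')^2)=\operatorname{tr}(Y^2)=\|Y\|_F^2\geq 0,$$
so necessarily $Y=0$ and thus $\gamma'=0$ on each smooth piece of $[t_1,t_2]$, making $\gamma$ constant there and contradicting the non-triviality assumption.

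The key conceptual point, and the main obstacle, is the interplay between the two defining inequalities of $\mathcal{K}$: weak monotonicity of $\det$ comes almost for free from the linear inequality alone, but strict monotonicity genuinely requires the quadratic Lorentzian inequality, which acts as a definiteness certificate ruling out any non-zero tangent vector inside $\mathcal{K}$ that is also tangent to a level set of $\det$. Geometrically, the foliation $\{F^{-1}(c)\}_{c>0}$ of Section \ref{partial orders} is \emph{spacelike} relative to the Lorentzian cone field identified in Section \ref{pseudo}, so no non-trivial conal curve can remain within a single leaf. The only technical cleanup is stitching smooth pieces together at the finitely many corners by continuity of $\det\circ\gamma$, which is routine.
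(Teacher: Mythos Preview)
Your proof is correct and follows essentially the same approach as the paper: both use Jacobi's formula and the observation that if $X\in\mathcal{K}(\Sigma)$ has $\operatorname{tr}(\Sigma^{-1}X)=0$, then the quadratic inequality forces $\operatorname{tr}(Y^2)\leq 0$ for the symmetric matrix $Y=\Sigma^{-1/2}X\Sigma^{-1/2}$, hence $X=0$. The only difference is packaging: the paper establishes the pointwise fact $X\in\mathcal{K}(\Sigma)\setminus\{0\}\Rightarrow\operatorname{tr}(\Sigma^{-1}X)>0$ first and then differentiates, whereas you first obtain $(\det\circ\gamma)'\geq 0$ and run the same definiteness argument by contrapositive on an interval of constancy.
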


\begin{proof}
First note that $X\in\mathcal{K}(\Sigma)\setminus\{0\}$ implies that $\tr(\Sigma^{-1}X)>0$.
This follows by noting that if $\tr(\Sigma^{-1}X)=0$, then $\tr(\Sigma^{-1}X\Sigma^{-1}X)=\tr[(\Sigma^{-1/2}X\Sigma^{-1/2})^2]\leq0$, which is a contradiction. 
For simplicity, we assume that $\gamma$ is a non-trivial smooth conal curve. The proof for a piecewise smooth curve is similar. We then have $\tr(\gamma(t)^{-1}\gamma'(t))>0$, which implies that
\begin{equation} \label{der det}
\frac{d}{dt}\det\gamma(t)=(\det\gamma(t))\tr\left(\gamma(t)^{-1}\gamma'(t)\right)>0.
\end{equation}
\end{proof}

Proposition \ref{det} clearly implies that $S^+_n$ equipped with any of the cone fields described by  (\ref{quadField}) does not admit any non-trivial closed conal curves. Indeed, this result holds for all affine-invariant cone fields, not just quadratic ones. To see this, note that the permutation symmetry (\ref{permutation}) of Proposition \ref{spectral}, implies that $\tr(\Sigma^{-1}X)\neq 0$ whenever $X\in\mathcal{K}(\Sigma)\setminus\{0\}$.
It thus follows by (\ref{der det}) that $\det\circ\gamma:[0,1]\rightarrow\mathbb{R}^+$ is a strictly monotone function for any non-trivial conal curve $\gamma$, which rules out the existence of closed conal curves. We thus arrive at the following theorem.

\begin{theorem}
All affine-invariant conal orders on $S^+_n$ are partial orders.
\end{theorem}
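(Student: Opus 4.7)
The plan is to reduce antisymmetry of $\leq_{\mathcal{K}}$ to the absence of non-trivial closed conal curves in $S^+_n$, and then to run the strategy behind Proposition \ref{det} in the full generality of affine-invariant cone fields. The engine is the identity (\ref{der det}), and what must be upgraded is the strict inequality $\tr(\Sigma^{-1}X) > 0$ for all $X\in\mathcal{K}(\Sigma)\setminus\{0\}$, without relying on the special quadratic structure used in Proposition \ref{det}.

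First, by affine invariance I would reduce the positivity statement at an arbitrary $\Sigma$ to the base-point $I$: the map $X\mapsto \Sigma^{-1/2}X\Sigma^{-1/2}$ is a linear isomorphism $T_\Sigma S^+_n\to T_I S^+_n$ carrying $\mathcal{K}(\Sigma)$ onto $\mathcal{K}(I)$ and intertwining $\tr(\Sigma^{-1}\,\cdot\,)$ with $\tr(\cdot)$. It therefore suffices to show that $\tr Y > 0$ whenever $Y\in\mathcal{K}(I)\setminus\{0\}$. Next I would invoke Proposition \ref{spectral}: there is a permutation-invariant cone $\mathcal{K}_\Lambda\subset\mathbb{R}^n$ with $Y\in\mathcal{K}(I)$ iff $\boldsymbol{\lambda}_Y\in\mathcal{K}_\Lambda$, and $\tr Y = \boldsymbol{1}^T\boldsymbol{\lambda}_Y$. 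Since diagonal symmetric matrices realize every eigenvalue configuration, $\mathcal{K}_\Lambda$ inherits convexity and pointedness from $\mathcal{K}(I)$. The task collapses to: $\boldsymbol{\lambda}\in\mathcal{K}_\Lambda$ with $\boldsymbol{1}^T\boldsymbol{\lambda}=0$ forces $\boldsymbol{\lambda}=0$.

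The decisive step is an averaging argument. Permutation-invariance yields $\sigma\boldsymbol{\lambda}\in\mathcal{K}_\Lambda$ for every $\sigma\in S_n$, and convexity gives
\begin{equation*}
\frac{1}{n!}\sum_{\sigma\in S_n}\sigma\boldsymbol{\lambda}\;=\;\frac{\boldsymbol{1}^T\boldsymbol{\lambda}}{n}\boldsymbol{1}\;=\;0\;\in\;\mathcal{K}_\Lambda.
\end{equation*}
Fixing any $\sigma_0$ and rearranging, $\sigma_0\boldsymbol{\lambda} = -\frac{1}{n!-1}\sum_{\sigma\neq\sigma_0}\sigma\boldsymbol{\lambda}\in -\mathcal{K}_\Lambda$, so pointedness forces $\sigma_0\boldsymbol{\lambda}=0$ and hence $\boldsymbol{\lambda}=0$. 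Substituting back into (\ref{der det}), along any non-trivial piecewise smooth conal curve $\gamma$ the derivative $\tfrac{d}{dt}\det\gamma(t)=(\det\gamma(t))\tr(\gamma(t)^{-1}\gamma'(t))$ is strictly positive wherever it is defined, so $\det\circ\gamma:[0,1]\to\mathbb{R}^+$ is strictly increasing. This rules out any non-trivial closed conal curve, which in turn forces antisymmetry of $\prec_\mathcal{K}$ and, by taking closures together with continuity of $\det$, of $\leq_\mathcal{K}$.

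The main obstacle will be the pointedness-plus-convexity transfer from $\mathcal{K}(I)$ to the spectral cone $\mathcal{K}_\Lambda$: one has to verify that the correspondence in Proposition \ref{spectral} genuinely respects the linear operations used in the averaging step, exploiting that every real spectrum is attained by a diagonal symmetric matrix. Secondary technicalities include handling piecewise smooth curves in place of smooth ones, and carefully lifting non-existence of non-trivial closed $\prec_\mathcal{K}$-curves to antisymmetry of the closed preorder $\leq_\mathcal{K}$; both are routine once strict monotonicity of $\det$ is in hand.
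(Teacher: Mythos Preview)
Your approach is essentially the paper's: both reduce antisymmetry to strict monotonicity of $\det$ along conal curves via (\ref{der det}), and both obtain this from the claim that $\tr(\Sigma^{-1}X)\neq 0$ for nonzero $X\in\mathcal{K}(\Sigma)$, which the paper simply attributes to the permutation symmetry of Proposition~\ref{spectral} without further argument, while you supply the averaging-over-$S_n$ justification explicitly. Two small slips to clean up: the rearranged identity should read $\sigma_0\boldsymbol{\lambda}=-\sum_{\sigma\neq\sigma_0}\sigma\boldsymbol{\lambda}$ (no $\tfrac{1}{n!-1}$ factor, though this is immaterial for cone membership), and your averaging argument yields only $\tr Y\neq 0$ on $\mathcal{K}(I)\setminus\{0\}$, not $\tr Y>0$; either add the one-line convexity/connectedness argument giving constant sign, or weaken ``strictly increasing'' to ``strictly monotone,'' which is all that is needed to exclude closed conal curves.
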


At this point it is worth noting a few interesting features of the collection of submanifolds $\{F^{-1}(c)\}_{c>0}$ of $S^+_n$. First note that if $\gamma$ is an \emph{inextensible} conal curve, then by (\ref{der det}) it must intersect each of the submanifolds $F^{-1}(c)$ exactly once. That is, for each $c>0$, $F^{-1}(c)$ defines a \emph{Cauchy surface} for the causal structure induced by any affine-invariant cone field. We also note the following results which connect these submanifolds to geodesics on $S^+_n$ with respect to the standard affine-invariant Riemannian metric $ds^2=\tr[(\Sigma^{-1}d\Sigma)^2]$ on $S^+_n$.

\begin{proposition}
Endow $S^+_n$ with the Riemannian structure defined by the standard Riemannian metric $ds^2=\tr[(\Sigma^{-1}d\Sigma)^2]$. We have the following results.
\begin{enumerate}[i)]
  \item If $\Sigma_1,\Sigma_2\in S^+_n$ satisfy $\det \Sigma_1=\det \Sigma_2=c$, then the geodesic from $\Sigma_1$ to $\Sigma_2$ lies in $F^{-1}(c)$.
  \item If $X\in T_{\Sigma}S^+_n$ satisfies $\tr(\Sigma^{-1}X)=0$, then the geodesic through $\Sigma$ in the direction of $X$ stays on the submanifold $F^{-1}(\det\Sigma)$.
\end{enumerate}
\end{proposition}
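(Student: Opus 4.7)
The plan is to exploit the well-known closed form of affine-invariant geodesics on $S^+_n$, namely
\[
\gamma(t)=\Sigma^{1/2}\exp\!\bigl(t\,\Sigma^{-1/2}X\Sigma^{-1/2}\bigr)\Sigma^{1/2},
\]
which is the unique geodesic with $\gamma(0)=\Sigma$ and $\gamma'(0)=X$. The central computation is the determinant of $\gamma(t)$, and everything follows from the identity $\det(\exp A)=\exp(\tr A)$ together with $\tr(\log A)=\log\det A$ for $A\in S^+_n$.

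For part (ii), I would apply the formula for $\gamma(t)$ directly. Using multiplicativity of the determinant and $\det(\exp A)=\exp(\tr A)$, one gets
\[
\det\gamma(t)=\det\Sigma\cdot\exp\!\bigl(t\,\tr(\Sigma^{-1/2}X\Sigma^{-1/2})\bigr)=\det\Sigma\cdot\exp\!\bigl(t\,\tr(\Sigma^{-1}X)\bigr).
\]
The hypothesis $\tr(\Sigma^{-1}X)=0$ then yields $\det\gamma(t)=\det\Sigma$ for all $t$, so $\gamma\subset F^{-1}(\det\Sigma)$.

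For part (i), I would first invert the geodesic formula to obtain the initial velocity of the geodesic from $\Sigma_1$ to $\Sigma_2$, namely
\[
X=\Sigma_1^{1/2}\log\!\bigl(\Sigma_1^{-1/2}\Sigma_2\Sigma_1^{-1/2}\bigr)\Sigma_1^{1/2}.
\]
A short trace computation, combined with $\tr(\log A)=\log\det A$, gives
\[
\tr(\Sigma_1^{-1}X)=\tr\log\!\bigl(\Sigma_1^{-1/2}\Sigma_2\Sigma_1^{-1/2}\bigr)=\log\!\frac{\det\Sigma_2}{\det\Sigma_1}=0,
\]
since $\det\Sigma_1=\det\Sigma_2=c$. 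Part (ii) then immediately implies that the geodesic stays on $F^{-1}(c)$.

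There is no real obstacle here; the argument is essentially a one-line verification once the closed form for geodesics is in hand. The only point that merits brief justification is the formula for the initial velocity, which could either be quoted from the references on the affine-invariant geometry of $S^+_n$ already cited (e.g.\ \cite{Pennec2006}) or verified by differentiating $\gamma(t)$ at $t=0$ and solving for $X$. Everything else reduces to the elementary determinant–trace–exponential identities.
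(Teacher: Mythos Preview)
Your proposal is correct and uses essentially the same approach as the paper: both arguments rely on the explicit geodesic formula and the identities $\det(\exp A)=\exp(\tr A)$ and $\tr(\log A)=\log(\det A)$. The only difference is organizational---you prove (ii) first and deduce (i) by computing the initial velocity and invoking (ii), whereas the paper computes $\det\gamma(t)$ directly in each part; your ordering is slightly more economical but the substance is identical.
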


\begin{proof}
$i)$ Let $\Sigma_1,\Sigma_2\in S^+_n$ satisfy $\det\Sigma_1=\det\Sigma_2$. The geodesic $\gamma$ from $\Sigma_1$ to $\Sigma_2$ is given by
\begin{equation} \label{geodesic from point to point}
\gamma(t)=\Sigma_1^{1/2}\exp\left(t\log\left(\Sigma_1^{-1/2}\Sigma_2\Sigma_1^{-1/2}\right)\right)\Sigma_1^{1/2}.
\end{equation}
Thus, $\det(\gamma(t))=(\det\Sigma_1)\det(\exp(t\log(\Sigma_1^{-1/2}\Sigma_2\Sigma_1^{-1/2}))$. Using the matrix identity $\log(\det A)=\tr(\log A)$, we find that
\begin{align}
\log\left[\det\left(\exp\left(t\log\left(\Sigma_1^{-\frac{1}{2}}\Sigma_2\Sigma_1^{-\frac{1}{2}}\right)\right)\right)\right]&=
\tr\left[\log\left(\exp\left(t\log\left(\Sigma_1^{-\frac{1}{2}}\Sigma_2\Sigma_1^{-\frac{1}{2}}\right)\right)\right)\right] \nonumber \\
&= t \tr\left(\log\left(\Sigma_1^{-\frac{1}{2}}\Sigma_2\Sigma_1^{-\frac{1}{2}}\right)\right) \\
&= t \log\left(\det\Sigma_2/\det\Sigma_1\right) = 0.
\end{align}
Therefore, $\det(\exp(t\log(\Sigma_1^{-1/2}\Sigma_2\Sigma_1^{-1/2}))=1$, which implies that $\det(\gamma(t))=\det\Sigma_1$ for all $t\in\mathbb{R}$.

$ii)$ The geodesic $\gamma$ from $\Sigma$ in the direction of $X\in T_{\Sigma}S^+_n$ takes the form
$\gamma(t)=\Sigma^{1/2}\exp(t\Sigma^{-1/2}X\Sigma^{-1/2})\Sigma^{1/2}$. If $\tr(\Sigma^{-1}X)=0$, then
\begin{equation}
\log(\det(\exp(t\Sigma^{-1/2}X\Sigma^{-1/2})))=\tr(t\Sigma^{-1/2}X\Sigma^{-1/2})=t\tr(\Sigma^{-1}X)=0,
\end{equation}
which implies that $\det(\gamma(t))=(\det\Sigma)\det(\exp(t\Sigma^{-1/2}X\Sigma^{-1/2}))=\det\Sigma$ for all $t\in\mathbb{R}$.
\end{proof}

\subsection{Causal semigroups} \label{causal semi}

Define a \emph{wedge} to be a closed and convex subset of a vector space that is also invariant with respect to scaling by positive numbers. Notice in particular that a wedge need not be pointed. Let $\mathcal{M}=G/H$ be a homogeneous space, $G$ a Lie group with group identity element $e$ and Lie algebra $\mathfrak{g}$, $H$ a closed subgroup with Lie algebra $\mathfrak{h}$, and $\pi:G\rightarrow \mathcal{M}$ the associated projection map. Assume that the Lie algebra $\mathfrak{g}$ contains a wedge $W$ such that $(i)$ $W\cap-W=\mathfrak{h}$ and $(ii)$ $\operatorname{Ad}(h)W=W$ for all $h\in H$. A wedge $W$ is said to be a \emph{Lie wedge} if $e^{\operatorname{ad}h}W=W$ for all $h\in W\cap-W$. Denoting the left action of $G$ on $\mathcal{M}$ by  $\tau_g:\mathcal{M}\rightarrow\mathcal{M}$, we have $\pi\circ\lambda_g=\tau_g\circ \pi$, where $\lambda_g$ is the left multiplication with $g$ on $G$. Conditions $(i)$ and $(ii)$ ensure that $d\pi\vert_g\circ d\lambda_g\vert_eW$ only depends on $\pi(g)$, so that 
\begin{equation}
\mathcal{K}(\pi(g))=\left(d\pi\vert_g\circ d\lambda_g\vert_e\right)W,
\end{equation}
yields a well-defined field of pointed cones on $\mathcal{M}$ that is
invariant under the action of $G$ on $\mathcal{M}$: $d\tau_g\vert_x\mathcal{K}(x)=\mathcal{K}(\tau_g(x))$. These results can be found in \cite{Hilgert1989}.
The set $S=\{g\in G: o \leq_{\mathcal{K}} \tau_g(o)\}$, where $o=\pi(e)$, is a closed semigroup of $G$ referred to as the causal semigroup of $(\mathcal{M},G,\mathcal{K})$.
 The following theorem is derived from \cite{Neeb1991}.

\begin{theorem} \label{Causal semigroup thm}
Let $S=\overline{\langle\exp W\rangle H}\subseteq G$, then
$S=\pi^{-1}\left(\{x\in\mathcal{M}:o\leq_{\mathcal{K}} x\}\right)$ and $\mathcal{M}$ is globally orderable with respect to $\mathcal{K}$ if and only if $W=\boldsymbol{L}(S)$, where
\begin{equation}
\boldsymbol{L}(S)=\{Z\in\mathfrak{g}:\exp(\mathbb{R}^+Z)\subseteq S\}.
\end{equation}
\end{theorem}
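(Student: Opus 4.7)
The plan has two parts matching the two assertions. First I would establish the identity $S=\pi^{-1}(\{x\in\mathcal{M}:o\leq_{\mathcal{K}} x\})$ by a double inclusion. For $S\subseteq\pi^{-1}(\{x:o\leq_{\mathcal{K}} x\})$, note that for any $Z\in W$ and any $g\in G$ the curve $t\mapsto\pi(g\exp(tZ))=\tau_g(\pi(\exp(tZ)))$ has tangent vector $d\tau_g(d\pi\vert_e Z)$, which by construction of $\mathcal{K}$ and its $G$-invariance lies in $\mathcal{K}(\tau_g(o))$. Concatenating finitely many such segments shows that $\pi(\exp(t_1Z_1)\cdots\exp(t_kZ_k))$ is the endpoint of a piecewise smooth conal curve from $o$, so that $\pi(\langle\exp W\rangle H)\subseteq\{x:o\leq_{\mathcal{K}} x\}$; closing both sides gives the inclusion since the forward set is closed by construction. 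For the reverse inclusion, given $g$ with $o\leq_{\mathcal{K}}\pi(g)$ I would approximate by conal curves $\gamma_n$ from $o$ to points $x_n\to\pi(g)$, lift each $\gamma_n$ through the principal $H$-bundle $\pi$ to a curve $\tilde\gamma_n$ in $G$ starting at $e$, and use $\mathrm{Ad}_H$-invariance of $W$ to adjust the $\mathfrak{h}$-component so that the logarithmic derivative $d\lambda_{\tilde\gamma_n(t)^{-1}}\tilde\gamma_n'(t)$ takes values in $W$. A standard approximation result for time-dependent flows expresses $\tilde\gamma_n(1)$ as a limit of finite products from $\exp(W)$, placing $g\in\overline{\langle\exp W\rangle}H=S$ after the limit.

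For the equivalence, I would first translate global orderability into the algebraic statement $S\cap S^{-1}=H$. This uses the $G$-invariance of $\leq_{\mathcal{K}}$: if $g\in S\cap S^{-1}$, then $o\leq_{\mathcal{K}}\pi(g)$, and applying $\tau_g$ to $o\leq_{\mathcal{K}}\pi(g^{-1})$ gives $\pi(g)\leq_{\mathcal{K}} o$, so antisymmetry forces $\pi(g)=o$ and hence $g\in H$; the converse direction is immediate. The inclusion $W\subseteq\boldsymbol{L}(S)$ is obvious from $\exp(\mathbb{R}^+W)\subseteq S$. Assuming $W=\boldsymbol{L}(S)$, the identity $\boldsymbol{L}(S)\cap-\boldsymbol{L}(S)=\boldsymbol{L}(S\cap S^{-1})$ for closed subsemigroups yields $\boldsymbol{L}(S\cap S^{-1})=W\cap-W=\mathfrak{h}$; since $S\cap S^{-1}$ is a closed subgroup of $G$ containing $H$ and sharing its tangent wedge at $e$, it must coincide with $H$, giving antisymmetry.

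The main obstacle is the converse: assuming $\mathcal{M}$ globally orderable, show $\boldsymbol{L}(S)\subseteq W$. I would argue by contradiction. If $Z\in\boldsymbol{L}(S)\setminus W$, Hahn-Banach separates $d\pi\vert_e Z$ from the closed convex cone $\mathcal{K}(o)=d\pi\vert_e W$ in $T_o\mathcal{M}$. The curve $t\mapsto\pi(\exp(tZ))$ then lies in the forward set $\{x:o\leq_{\mathcal{K}} x\}$ for all $t\geq 0$ while its initial tangent vector escapes the infinitesimal cone; combining this with small perturbations in $\mathcal{K}$ and the concatenation construction of Part 1, one produces a nontrivial conal loop through $o$ in violation of $S\cap S^{-1}=H$. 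The technical heart is the identification of $\boldsymbol{L}(S)$ with the Lie wedge generated by $W$, which requires careful use of the Baker-Campbell-Hausdorff formula to control products $\exp(tZ)\exp(sY)$ for $Y\in W$; this is precisely the content of the semigroup-theoretic results in \cite{Neeb1991} that I would invoke rather than reprove in full.
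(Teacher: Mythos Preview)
The paper does not provide a proof of this theorem at all: it is stated as a result ``derived from \cite{Neeb1991}'' and then used as a black box in the proof of Theorem~\ref{geodesic conal curves thm}. So there is no proof in the paper to compare your attempt against.

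Your sketch follows the standard route in the Lie semigroup literature and is broadly correct in spirit, and you are right that the substantive content ultimately resides in the cited work of Neeb. One point deserves more care: in the direction $W=\boldsymbol{L}(S)\Rightarrow$ global orderability, you conclude from $\boldsymbol{L}(S\cap S^{-1})=\mathfrak{h}$ that the closed subgroup $S\cap S^{-1}$ coincides with $H$. In general a closed subgroup containing $H$ and having the same tangent wedge at $e$ need not equal $H$ (it could be a nontrivial discrete extension), so this step requires either a connectedness hypothesis or an additional argument specific to the setting; in Neeb's framework this is handled, but your sketch glosses over it. Since you already concede that the hard converse direction must be imported from \cite{Neeb1991}, the honest summary is that your write-up is an outline of the architecture of Neeb's proof rather than an independent argument, which is consistent with how the paper itself treats the result.
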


The affine-invariant cone fields on $S^+_n=GL(n)/O(n)$ can be viewed as projections of invariant wedge fields on the Lie group $GL(n)$ in the sense of the above results. Since we have the reductive decomposition $\mathfrak{gl}(n)=\mathfrak{o}(n)\oplus\mathfrak{m}$, it is easy to construct the corresponding wedge field $W$ that satisfies conditions $(i)$ and $(ii)$ for a given affine-invariant cone field $\mathcal{K}$. We will now use this structure and Theorem \ref{Causal semigroup thm} to prove the following important result.

\begin{theorem} \label{geodesic conal curves thm}
Let $S^+_n$ be equipped with an affine-invariant cone field $\mathcal{K}$ and the standard affine-invariant Riemannian metric $ds^2=\tr[(\Sigma^{-1}d\Sigma)^2]$. For any pair of matrices $\Sigma_1,\Sigma_2\in S^+_n$, we have $\Sigma_1\leq_{\mathcal{K}}\Sigma_2$ if and only if the geodesic from $\Sigma_1$ to $\Sigma_2$ is a conal curve.
\end{theorem}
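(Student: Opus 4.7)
The plan is to use affine invariance to reduce to the case $\Sigma_1 = I$ and then combine a direct tangent-space computation with Theorem \ref{Causal semigroup thm}. The diffeomorphism $\tau_{\Sigma_1^{-1/2}}$ preserves the affine-invariant cone field $\mathcal{K}$, the Riemannian metric, and hence both geodesics and conal curves; it sends $\Sigma_1 \mapsto I$ and $\Sigma_2 \mapsto \Sigma := \Sigma_1^{-1/2}\Sigma_2\Sigma_1^{-1/2}$, so that the image of the geodesic joining $\Sigma_1$ and $\Sigma_2$ is $\gamma(t) = \exp(t\log\Sigma)$, $t \in [0,1]$. It thus suffices to prove that $I \leq_\mathcal{K} \Sigma$ if and only if $\gamma$ is a conal curve.

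The key tangent-space computation is that since $\log\Sigma$ commutes with $\gamma(t)$,
\begin{equation*}
\gamma(t)^{-1/2}\gamma'(t)\gamma(t)^{-1/2} = \log\Sigma \quad \text{for every } t \in [0,1],
\end{equation*}
so by affine invariance of $\mathcal{K}$ the geodesic $\gamma$ is conal if and only if $\log\Sigma \in \mathcal{K}(I)$. This immediately yields the direction ``$\log\Sigma \in \mathcal{K}(I) \Rightarrow I \leq_\mathcal{K} \Sigma$'' and reduces the remaining converse to showing that $I \leq_\mathcal{K} \Sigma$ forces $\log\Sigma \in \mathcal{K}(I)$.

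For this converse I will invoke Theorem \ref{Causal semigroup thm}. The wedge corresponding to $\mathcal{K}$ under the reductive decomposition $\mathfrak{gl}(n) = \mathfrak{o}(n)\oplus\mathfrak{m}$ is $W = \mathfrak{o}(n)\oplus\mathcal{K}(I)$, so the causal semigroup is $S = \overline{\langle \exp W\rangle H} \subseteq GL(n)$, with $H = O(n)$, and the theorem identifies $\{\Sigma' : I \leq_\mathcal{K} \Sigma'\}$ with $\pi(S)$. Writing any $g \in GL(n)$ by polar decomposition as $g = \exp(X)h$ with $X \in \mathfrak{m}$ and $h \in O(n)$, one computes $\pi(g) = \exp(2X)$, and the right $H$-invariance of $S$ (which holds because $H \subseteq S$) gives the equivalence $\Sigma \in \pi(S) \iff \exp(\tfrac{1}{2}\log\Sigma) \in S$. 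Since $S^+_n$ is globally orderable, the same theorem further yields $W = \boldsymbol{L}(S)$, so the target inclusion $\log\Sigma \in \mathcal{K}(I) = W\cap\mathfrak{m}$ is equivalent to $\exp(t\log\Sigma)\in S$ for every $t \geq 0$.

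The main obstacle is therefore the step from the single inclusion $\exp(\tfrac{1}{2}\log\Sigma)\in S$ to the one-parameter family $\exp(t\log\Sigma)\in S$ for all $t\geq 0$, which the semigroup property alone only delivers at positive integer multiples of $\tfrac{1}{2}\log\Sigma$. My plan is to close this gap via the polar decomposition of the Ol'shanskii semigroup on the Riemannian symmetric space $GL(n)/O(n)$: the Cartan involution $\sigma(g) = (g^T)^{-1}$ fixes $O(n)$ pointwise and negates $\mathfrak{m}$, and combined with the closedness of $S$ and the identity $W = \boldsymbol{L}(S)$ yields the identification $S \cap \exp(\mathfrak{m}) = \exp(\mathcal{K}(I))$. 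Granting this, $\exp(\tfrac{1}{2}\log\Sigma)\in S$ forces $\tfrac{1}{2}\log\Sigma\in\mathcal{K}(I)$, hence $\log\Sigma\in\mathcal{K}(I)$ since $\mathcal{K}(I)$ is a cone, and the tangent-space computation then shows $\gamma$ is conal.
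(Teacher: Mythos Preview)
Your proposal is correct and follows essentially the same route as the paper: reduce by affine invariance to $\Sigma_1=I$, identify the geodesic being conal with $\log\Sigma\in\mathcal{K}(I)$, and for the converse use Theorem~\ref{Causal semigroup thm} together with the Lawson/Ol'shanskii polar decomposition of the causal semigroup $S$ to conclude that the symmetric factor of any $g\in S$ with $\pi(g)=\Sigma$ lies in $\exp(\mathcal{K}(I))$. The paper applies Lawson's decomposition $g=(\exp X)Q$ with $X\in W\cap\mathfrak{m}=\mathcal{K}(I)$ directly to an element $g\in S$ projecting to $\Sigma$, whereas you first pass via ordinary polar decomposition to $\exp(\tfrac12\log\Sigma)\in S\cap\exp(\mathfrak{m})$ and then invoke $S\cap\exp(\mathfrak{m})=\exp(\mathcal{K}(I))$; these are two phrasings of the same step, and your intermediate appeal to $W=\boldsymbol{L}(S)$ is not actually needed once that decomposition is available.
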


\begin{proof}
Note that the expression of the geodesic from $\Sigma_1$ to $\Sigma_2$ given in (\ref{geodesic from point to point}) implies that this theorem is equivalent to
\begin{equation}  \label{inv order condition}
\Sigma_1\leq_{\mathcal{K}}\Sigma_2 \quad \Longleftrightarrow  \quad \log\left(\Sigma_1^{-1/2}\Sigma_2\Sigma_1^{-1/2}\right)\in\mathcal{K}(I).
\end{equation}
As $\mathcal{K}$ is affine-invariant, $\Sigma_1\leq_{\mathcal{K}}\Sigma_2$ is equivalent to $I\leq \Sigma_1^{-1/2}\Sigma_2\Sigma_1^{-1/2}$. Thus, it is sufficient to prove that
\begin{equation}
I\leq_{\mathcal{K}}\Sigma \quad \Longleftrightarrow \quad \log\left(\Sigma\right)\in\mathcal{K}(I),
\end{equation}
for any $\Sigma\in S^+_n$.
We define a wedge $W$ in $\mathfrak{gl}(n)$ by
\begin{equation} \label{W def}
W:=\{X+Y: X\in\mathcal{K}(I), Y\in\mathfrak{o}(n)\}\subset\mathfrak{gl}(n)=\mathfrak{m}\oplus\mathfrak{o}(n),
\end{equation}
where $\mathcal{K}(I)$ is viewed as a subset of $\mathfrak{m}\cong T_I S^+_n$. Note that (\ref{W def}) ensures that $W$ satisfies the properties required of it in Theorem \ref{Causal semigroup thm}. If $I\leq_{\mathcal{K}}\Sigma$, it follows from Theorem \ref{Causal semigroup thm}  that there exists $A\in W$ such that 
\begin{equation} \label{pi exp}
\Sigma=\pi(\exp A)=\tau_{\exp A}(I)=(\exp A)(\exp A)^T.
\end{equation}
By the polar decomposition theorem of \cite{Lawson1991}, any element $g=\exp A$ of the semigroup $S=\overline{\langle\exp W\rangle O(n)}\subset GL(n)$ admits a unique decomposition as $g=(\exp X)Q$ with $X\in W\cap\mathfrak{m}$ and $Q\in O(n)$. Thus, we have
\begin{equation}
\Sigma=\tau_g(I)=\tau_{\exp X}(I)=\exp 2X,
\end{equation}
so that $\log\Sigma=2X\in\mathcal{K}(I)$.
\end{proof}

\begin{remark}
Let $\mathcal{K}$ be a quadratic affine-invariant cone field described by (\ref{quadField}). Given a pair $\Sigma_1, \Sigma_2\in S^+_n$, we have by Theorem \ref{geodesic conal curves thm} that $\Sigma_1\leq_{\mathcal{K}} \Sigma_2$ if and only if $\log\left(\Sigma_1^{-1/2}\Sigma_2\Sigma_1^{-1/2}\right)\in\mathcal{K}(I)$, which is equivalent to
\begin{equation} \label{conic 4}
\begin{cases}
\tr\left(\log(\Sigma_1^{-1/2}\Sigma_2\Sigma_1^{-1/2})\right)\geq 0, \\
\left(\tr(\log(\Sigma_1^{-1/2}\Sigma_2\Sigma_1^{-1/2}))\right)^2-\mu\tr\left[(\log(\Sigma_1^{-1/2}\Sigma_2\Sigma_1^{-1/2}))^2\right]\geq 0.
\end{cases}
\end{equation}
Since $\Sigma_1^{-1/2}\Sigma_2\Sigma_1^{-1/2}$ and $\Sigma_2\Sigma_1^{-1}$ have the same spectrum, (\ref{conic 4}) can be written as
\begin{equation} \label{conic 5}
\begin{cases}
\tr\left(\log(\Sigma_2\Sigma_1^{-1})\right)\geq 0, \\ \left(\tr(\log(\Sigma_2\Sigma_1^{-1}))\right)^2-\mu\tr\left[(\log(\Sigma_2\Sigma_1^{-1}))^2\right]\geq 0,
\end{cases}
\end{equation}
which has the virtue of not involving square roots of $\Sigma_1$ and $\Sigma_2$. Equation (\ref{conic 5}) in turn is equivalent to
\begin{equation} \label{conic 6}
\begin{cases}
\sum_i\log\lambda_i\geq 0, \\ \left(\sum_i\log\lambda_i\right)^2-\mu\sum_i(\log\lambda_i)^2\geq 0,
\end{cases}
\end{equation}
where $\lambda_i=\lambda_i(\Sigma_2\Sigma_1^{-1})$ $(i=1,...,n)$ denote the $n$ real and positive eigenvalues of $\Sigma_2\Sigma_1^{-1}$. We have thus used invariance to reduce the question of whether a pair of positive definite matrices $\Sigma_1$ and $\Sigma_2$ are ordered with respect to any of the quadratic affine-invariant cone fields
to a pair of inequalities involving the spectrum of $\Sigma_2\Sigma_1^{-1}$.
\end{remark}

\subsection{Visualization of affine-invariant cone fields on $S^+_2$} \label{visualization}

It is well-known that the set of positive semidefinite matrices of dimension $n$ forms a cone in the space of symmetric $n\times n$ matrices. Moreover, $S^+_n$ forms the interior of this cone.
A concrete visualization of this identification can be made in the $n=2$ case, as shown in Figure \ref{fig:1} $(a)$. The set $S^+_2$
can be identified with the interior of the set
$K=\{(x,y,z)\in \mathbb{R}^3: z^2-x^2-y^2 \geq 0, \ z \geq 0\}$,
through the bijection $\phi:S^+_2\to \operatorname{int} K$ given by
\begin{equation}  \label{bijection}
\phi: \begin{pmatrix}
a \ & \ b \\
b \ & \ c  
\end{pmatrix} \mapsto (x,y,z)=\left(\sqrt{2}b,\frac{1}{\sqrt{2}}(a-c),\frac{1}{\sqrt{2}}(a+c)\right).
\end{equation}
Inverting $\phi$, we find that $a = (z+y)/\sqrt{2}$, $b=x/\sqrt{2}$, $c=(z-y)/\sqrt{2}$. Note that the point $(x,y,z)=(0,0,\sqrt{2})$ corresponds to the identity matrix $I\in S^+_2$. We seek to arrive at a visual representation of the affine-invariant cone fields generated from the $\operatorname{Ad}_{O(n)}$-invariant cones (\ref{quad}) for different choices of the parameter $\mu$. The defining inequalities $\operatorname{tr}(X)\geq 0$ and $(\operatorname{tr}(X))^2 - \mu \operatorname{tr}(X^2) \geq 0$ in $T_IS^+_2$ take the forms
\begin{equation}  \label{coord}
\delta z \geq 0, \quad \textrm{and} \quad \left(\frac{2}{\mu}-1\right)\delta z^2 - \delta x^2-\delta y^2 \geq 0,
\end{equation}
respectively, where $(\delta x, \delta y, \delta z)\in T_{(0,0,\sqrt{2})} K\cong T_{I}S^+_2$. The corresponding spectral cone $\mathcal{K}_{\Lambda}^{\mu}\subset \mathbb{R}^2$ is given by
\begin{equation}
\lambda_1+\lambda_2\geq 0, \quad \textrm{and} \quad (\lambda_1+\lambda_2)^2-\mu(\lambda_1^2+\lambda_2^2)\geq 0.
\end{equation}
See Figure \ref{fig:1} $(b)$ for an illustration of such a cone for a choice of $\mu\in(0,1)$.

\begin{figure} 
\centering
\includegraphics[width=0.9\linewidth]{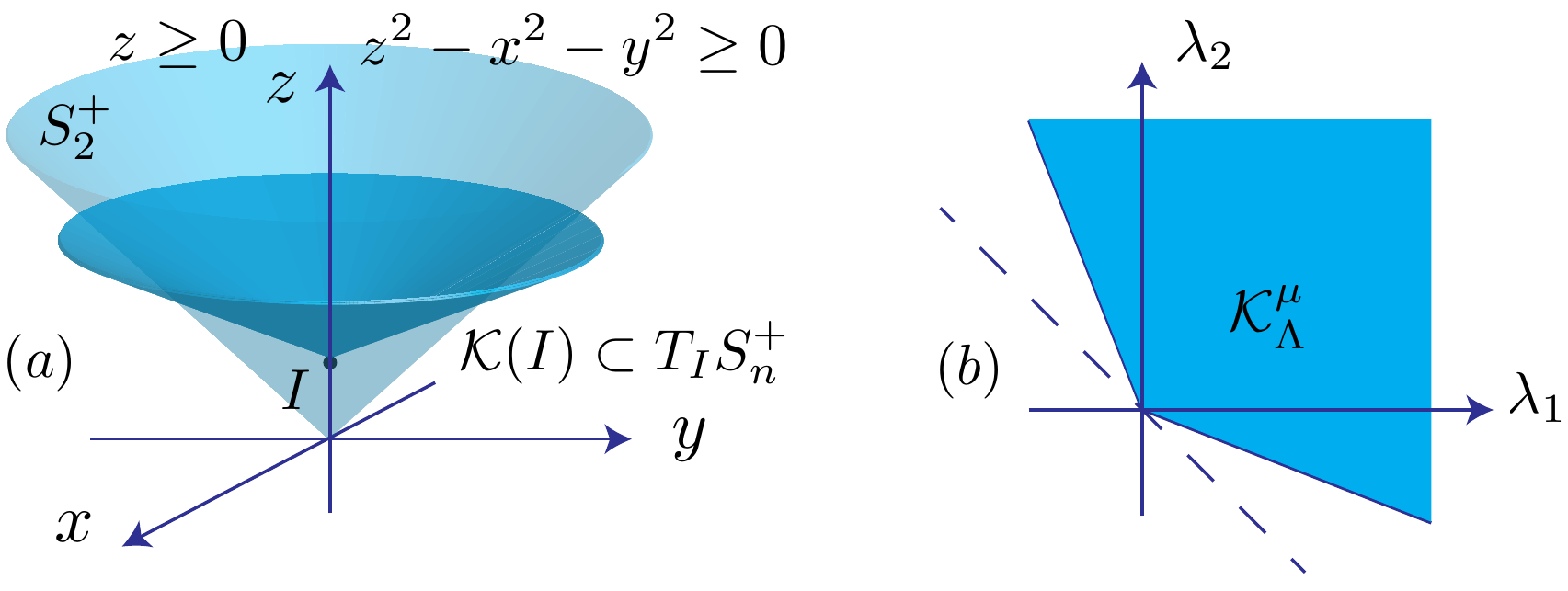}
  \caption{$(a)$ Identification of $S^+_2$ with the interior of the closed, convex, pointed cone $K=\{(x,y,z)\in\mathbb{R}^3:z^2-x^2-y^2  \geq 0$, $z\geq 0$\} in $\mathbb{R}^3$. The $\operatorname{Ad}_{O(n)}$-invariant cone $\mathcal{K}(I)\subset T_{I}S^+_n$ at identity is also shown for a choice of $\mu\in(0,1)$. $(b)$ The corresponding spectral cone $\mathcal{K}^{\mu}_{\Lambda}\subset\mathbb{R}^2$ which characterizes the cone $\mathcal{K}(I)\subset T_{I}S^+_n$.
  }
  \label{fig:1}
\end{figure}

Clearly the translation invariant cone fields generated from this cone are given by the same equations as in (\ref{coord}) for $(\delta x, \delta y, \delta z)\in T_{(x,y,z)} K\cong T_{\Sigma}S^+_2$, where $\phi(\Sigma)=(x,y,z)$.
To obtain the affine-invariant cone fields, note that at $\Sigma = \phi^{-1}(x,y,z)\in S^+_2$, the inequality $\operatorname{tr}(\Sigma^{-1}X)\geq 0$ takes the form
\begin{align}
 \operatorname{tr}\left[\begin{pmatrix}
 c & -b \\
 -b & a
 \end{pmatrix}
 \begin{pmatrix}
 \delta a \ & \ \delta b \\
 \delta b \ & \ \delta c
 \end{pmatrix}
 \right]= c\ \delta a - 2b \ \delta b +a \  \delta c \geq 0 \\
\Longleftrightarrow \ z\ \delta z - x \ \delta x - y \ \delta y \geq 0.  \label{halfspace}
\end{align}
Similarly, the inequality $(\operatorname{tr}(\Sigma^{-1}X))^2-\mu \operatorname{tr}(\Sigma^{-1}X\Sigma^{-1}X)\geq 0$ is equivalent to
\begin{align}  \label{coord2}
 2(x\ \delta x+y\ \delta y - z\ \delta z)^2 - &\mu\left[(z^2+x^2-y^2)\delta x^2+(z^2-x^2-y^2)\delta y^2 \right.
 \nonumber \\
 + \ (x^2+y^2+z^2)\delta z^2 &\left.+ \ 4xy\ \delta x \delta y - 4xz\ \delta x \delta z - 4yz\ \delta y \delta z\right] \geq 0,
\end{align}
where $(\delta x, \delta y, \delta z)\in T_{(x,y,z)} K\cong T_{\Sigma}S^+_2$. In the case $\mu=1$, this reduces to $(\frac{2}{\mu}-1)\delta z^2-\delta x^2-\delta y^2 \geq 0$. Thus, for $\mu =1$ the quadratic cone field generated by affine-invariance coincides with the corresponding translation-invariant cone field. Generally, however, affine-invariant and translation-invariant cone fields do not agree, as depicted in Figure \ref{fig:2}. Each of the distinct cone fields in Figure \ref{fig:2} induces a distinct partial order on $S^+_n$.

\begin{figure}
\centering
\includegraphics[width=0.95\linewidth]{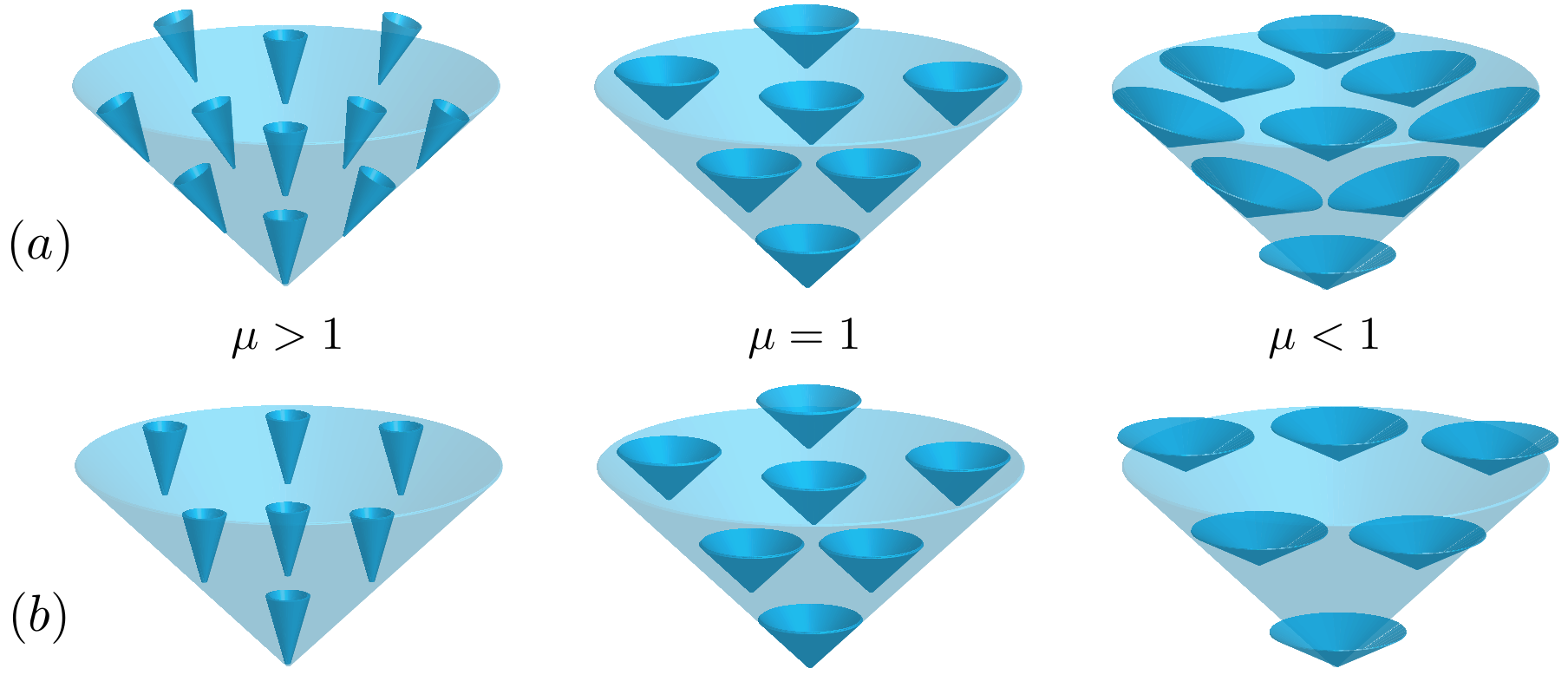}
  \caption{Cone fields on $S^+_2$: $(a)$ Quadratic affine-invariant cone fields for different choices of the parameter $\mu \in (0,2)$. $(b)$ The corresponding translation-invariant cone fields.
  }
  \label{fig:2}
\end{figure}

\subsection{The L\"owner order}

The L\"owner order is the partial order $\geq_L$ on $S^+_n$ defined by
\begin{equation}  \label{Lowner}
A \geq_L B \quad \Longleftrightarrow \quad A-B \geq_L O,
\end{equation}
where the inequality on the right denotes that $A-B$ is positive semidefinite \cite{Bhatia}. The definition in (\ref{Lowner}) is based on translations and the `flat' geometry of $S^+_n$. It is clear that the L\"owner order is translation invariant in the sense that $A\geq_L B$ implies that $A+C \geq_L B +C$ for all $A,B,C\in S^+_n$. From the perspective of conal orders, the L\"owner order is the partial order induced by the cone field generated by translations of the cone of positive semidefinite matrices at $T_IS^+_n$. 

In the previous section, we gave an explicit construction showing that the cone field generated through translations of the cone of positive semidefinite matrices at $T_IS^+_n$ coincides with the cone field generated through affine-invariance in the $n=2$ case. We will now show that this is a general result which holds for all $n$. First note that the cone at $T_IS^+_n$ can be expressed as
\begin{equation}
\mathcal{K}(I)=\{X\in T_IS^+_n: u^TXu \geq 0 \  \forall u\in \mathbb{R}^n,\  u^TXu = 0 \Rightarrow u =0\},
\end{equation}
and the resulting translation-invariant cone field is simply given by
\begin{equation}
\mathcal{K}_T(\Sigma)=\{X\in T_{\Sigma}S^+_n: u^TXu \geq 0 \  \forall u\in \mathbb{R}^n,\  u^TXu = 0 \Rightarrow u =0\}.
\end{equation}
The corresponding affine-invariant cone field is given by
\begin{align}  \label{A}
\mathcal{K}_A(\Sigma)=\{X\in T_{\Sigma}S^+_n: u^T\Sigma^{-1/2}X\Sigma^{-1/2}u \geq 0 \  & \forall u\in \mathbb{R}^n, \nonumber \\
\  u^T\Sigma^{-1/2}&X\Sigma^{-1/2}u = 0 \Rightarrow u =0\},
\end{align}
which is seen to be equal to $\mathcal{K}_T$ by introducing the invertible transformation $\bar{u}=\Sigma^{-1/2}u$ in (\ref{A}). Thus we see that the L\"owner order enjoys the special status of being both affine-invariant and translation-invariant, even though its classical definition is based on the `flat' or translational geometry on $S^+_n$.

\section{Monotone functions on $S^+_n$}  \label{section 4}

\subsection{Differential positivity}

Let $f$ be a map of $S^+_n$ into itself. We say that $f$ is \emph{monotone} with respect to a partial order $\geq$ on $S^+_n$ if $f(\Sigma_1)\geq f(\Sigma_2)$ whenever $\Sigma_1\geq \Sigma_2$.  Such functions were introduced by L\"owner in his seminal paper \cite{Lowner1934} on operator monotone functions. Since then operator monotone functions have been studied extensively and found applications to many fields including electrical engineering \cite{Anderson1976}, network theory, and quantum information theory \cite{Bhatia2013, Nielsen2002}. Monotonicity of mappings and dynamical systems with respect to partial orders induced by cone fields have a local geometric characterization in the form of differential positivity \cite{Forni2015}.  
A smooth map $f:S^+_n\to S^+_n$ is said to be differentially positive with respect to a cone field $\mathcal{K}$ on $S^+_n$ if
$df\vert_{\Sigma}(\delta\Sigma)\in\mathcal{K}(f(\Sigma))$ whenever $\delta\Sigma\in\mathcal{K}(\Sigma)$,
where $df\vert_{\Sigma}:T_{\Sigma}S^+_n \to T_{f(\Sigma)}S^+_n$ denotes the differential of $f$ at $\Sigma$. Assuming that $\geq_{\mathcal{K}}$ is a partial order induced by $\mathcal{K}$, then $f$ is monotone with respect to $\geq_{\mathcal{K}}$ if and only if it is differentially positive with respect to $\mathcal{K}$. To see this, recall that $\Sigma_2\geq_{\mathcal{K}} \Sigma_1$ means that there exists some conal curve $\gamma:[0,1]\to S^+_n$ such that $\gamma(0)=\Sigma_ 1$, $\gamma(1)=\Sigma_2$ and $\gamma'(t)\in \mathcal{K}(\gamma(t))$ for all $t\in (0,1)$. Now $f\circ\gamma: [0,1]\to S^+_n$ is a curve in $S^+_n$ with $(f\circ\gamma) (0)=f(\Sigma_1)$, $(f\circ\gamma)(1)=f(\Sigma_2)$, and
\begin{equation}
(f\circ\gamma)'(t)=df\vert_{\gamma(t)}\gamma'(t).
\end{equation}
Hence, $f\circ\gamma$ is a conal curve joining $f(\Sigma_1)$ to $f(\Sigma_2)$ if and only if 
$df\vert_{\gamma(t)}\mathcal{K}(\gamma(t))\subseteq\mathcal{K}(f(\gamma(t))$. 

\subsection{The Generalized L\"owner-Heinz Theorem}

One of the most fundamental results in operator theory is the  L\"owner-Heinz theorem  \cite{Lowner1934,Heinz1951} stated below.

\begin{theorem} [L\"owner-Heinz] \label{LownerHeinz}
If $\Sigma_1\geq_L \Sigma_2$ in $S^+_n$ and $r\in[0,1]$, then 
\begin{equation}
\Sigma_1^r\geq_L\Sigma_2^r.
\end{equation}
\end{theorem}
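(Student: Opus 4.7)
The plan is to reduce the Löwner--Heinz theorem to the differential positivity framework developed in Section 4.1, applied to the translation/affine-invariant Löwner cone field described in Section 3.6. Since the Löwner cone field is the constant assignment $\mathcal{K}_L(\Sigma) = \{X \in T_\Sigma S^+_n : X \geq 0\}$, and since we established that monotonicity of a smooth map $f: S^+_n \to S^+_n$ with respect to $\geq_L$ is equivalent to $df|_\Sigma \mathcal{K}_L(\Sigma) \subseteq \mathcal{K}_L(f(\Sigma))$, the task for $f(\Sigma) = \Sigma^r$ collapses to a single linear-algebraic claim: the Fréchet derivative $df|_\Sigma$ maps positive semidefinite matrices to positive semidefinite matrices at every $\Sigma \in S^+_n$.

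To expose this positivity, I would represent the fractional power via the standard integral formula valid for $r \in (0,1)$,
\[
\Sigma^r \;=\; \frac{\sin(r\pi)}{\pi}\int_0^\infty \lambda^{r-1}\, \Sigma(\Sigma+\lambda I)^{-1}\, d\lambda,
\]
and differentiate under the integral sign. Using $\frac{d}{ds}\big|_{s=0}(\Sigma+sX+\lambda I)^{-1} = -(\Sigma+\lambda I)^{-1} X (\Sigma+\lambda I)^{-1}$ together with the elementary identity $\Sigma(\Sigma+\lambda I)^{-1} = I - \lambda (\Sigma+\lambda I)^{-1}$, I expect the two contributions from differentiating the numerator and the denominator to recombine cleanly into
\[
df\big|_\Sigma X \;=\; \frac{\sin(r\pi)}{\pi} \int_0^\infty \lambda^{r}\, (\Sigma+\lambda I)^{-1} X (\Sigma+\lambda I)^{-1}\, d\lambda.
\]
The conclusion is then immediate: for $r \in (0,1)$ the prefactor $\sin(r\pi)$ is non-negative, and for each $\lambda > 0$ the congruence $X \mapsto (\Sigma+\lambda I)^{-1} X (\Sigma+\lambda I)^{-1}$ preserves positive semidefiniteness, so the integrand is PSD-valued and the integral (a limit of PSD Riemann sums in the closed PSD cone) is PSD. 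The endpoints $r = 0$ (constant map to $I$) and $r = 1$ (identity map) are trivial, and differential positivity thus holds on all of $[0,1]$.

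The main obstacle I anticipate is purely analytic rather than geometric: one must justify exchanging the derivative with the integral and verify integrability of $\lambda^r \|(\Sigma+\lambda I)^{-1} X (\Sigma+\lambda I)^{-1}\|$ at both endpoints of $(0,\infty)$. Since $(\Sigma+\lambda I)^{-1}$ is bounded as $\lambda \to 0^+$ and behaves like $\lambda^{-1} I$ as $\lambda \to \infty$, the integrand is $O(\lambda^{r})$ near zero and $O(\lambda^{r-2})$ at infinity, both integrable precisely for $r \in (0,1)$, which legitimizes the computation on the entire interior of the allowed range. Once the integral formula for $df|_\Sigma X$ is secured, the conclusion $\Sigma_1 \geq_L \Sigma_2 \Rightarrow \Sigma_1^r \geq_L \Sigma_2^r$ follows by integrating $df$ along any conal curve joining $\Sigma_2$ to $\Sigma_1$ (for instance the Löwner straight-line segment $t \mapsto (1-t)\Sigma_2 + t\Sigma_1$), as guaranteed by the differential-positivity characterization of monotonicity.
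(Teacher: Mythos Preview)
Your argument is correct: the integral representation of $\Sigma^r$ for $r\in(0,1)$, differentiated under the integral sign, yields a Fr\'echet derivative that is manifestly a positive combination of congruences $X\mapsto(\Sigma+\lambda I)^{-1}X(\Sigma+\lambda I)^{-1}$, hence preserves the L\"owner cone; the integrability checks and the endpoint cases are handled properly.

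However, note that the paper does \emph{not} supply its own proof of Theorem~\ref{LownerHeinz}. It states the classical L\"owner--Heinz result with references and explicitly remarks that most existing proofs ``are based on analytic methods, such as integral representations from complex analysis''---which is precisely the route you took. The paper's actual effort goes into Theorem~\ref{GenLownerHeinz}, the generalization to the quadratic affine-invariant orders of~(\ref{quadField}); that proof proceeds very differently, via the generalized Sylvester equation for $df_{1/p}$, the trace inequality of Lemma~\ref{inequalitylemma}, and a careful comparison of cone expansion rates for integer powers (Proposition~\ref{trace prop}), followed by a density argument. Your integral-representation approach is cleaner for the L\"owner case because the L\"owner cone is preserved by arbitrary congruences, but it does not obviously extend to the quadratic cones $(\tr(\Sigma^{-1}X))^2-\mu\tr((\Sigma^{-1}X)^2)\geq 0$, which is why the paper resorts to the Sylvester-equation machinery for its generalization.
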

Furthermore, if $n\geq 2$ and $r>1$, then $\Sigma_1\geq_L \Sigma_2 \not \Rightarrow \Sigma_1^r\geq_L \Sigma_2^r$.  \\

There are several different proofs of the L\"owner-Heinz theorem. See \cite{Bhatia, Pedersen1972, Lowner1934, Heinz1951}, for instance. Most of these proofs are based on analytic methods, such as integral representations from complex analysis. Instead we employ a geometric approach to study monotonicity based on a differential analysis of the system. One of the advantages of such an approach is that it is immediately applicable to all of the conal orders considered in this paper, while providing geometric insight into the behavior of the map under consideration. By using invariant differential positivity with respect to the family of affine-invariant cone fields in (\ref{quadField}), we arrive at the following extension to the L\"owner-Heinz theorem.

\begin{theorem} [Generalized L\"owner-Heinz] \label{GenLownerHeinz}
For any of the affine-invariant partial orders induced by the quadratic cone fields (\ref{quadField}) parametrized by $\mu$, the map  $f_r(\Sigma)=\Sigma^r$ is monotone on $S^+_n$ for any $r\in [0,1]$.
\end{theorem}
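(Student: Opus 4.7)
The plan is to invoke the differential positivity criterion of Section 4.1: the map $f_r$ is monotone with respect to the partial order $\leq_{\mathcal{K}}$ induced by the affine-invariant cone field $\mathcal{K}=\mathcal{K}^\mu$ of (\ref{quadField}) if and only if the differential $df_r\vert_\Sigma$ maps $\mathcal{K}(\Sigma)$ into $\mathcal{K}(\Sigma^r)$ for every $\Sigma\in S^+_n$. Two preliminary reductions simplify the problem. First, since $f_r$ is $O(n)$-equivariant, $f_r(Q\Sigma Q^T)=Qf_r(\Sigma)Q^T$, and $\mathcal{K}(I)$ is $\operatorname{Ad}_{O(n)}$-invariant, I may diagonalise and assume $\Sigma=D=\operatorname{diag}(d_1,\dots,d_n)$. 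Second, by affine-invariance, the inclusion $X\in\mathcal{K}(D)\Rightarrow df_r\vert_D(X)\in\mathcal{K}(D^r)$ is equivalent to $\tilde X\in\mathcal{K}(I)\Rightarrow Y\in\mathcal{K}(I)$, where $\tilde X:=D^{-1/2}XD^{-1/2}$ and $Y:=D^{-r/2}\,df_r\vert_D(X)\,D^{-r/2}$.

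Next, I apply the Daleckii--Krein formula for the derivative of the scalar power function on symmetric matrices at a diagonal argument: $df_r\vert_D(X)_{ij}=\phi_r(d_i,d_j)\,X_{ij}$, where $\phi_r(a,b)=(a^r-b^r)/(a-b)$ for $a\neq b$ and $\phi_r(a,a)=r a^{r-1}$. Substituting and tracking the factors of $D^{\pm 1/2}$ and $D^{\pm r/2}$ yields the entrywise scaling $Y_{ij}=h(d_i/d_j)\,\tilde X_{ij}$, with
\[
h(u)\;:=\;\frac{u^r-1}{u-1}\,u^{(1-r)/2}, \qquad h(1):=r.
\]
The linear condition for $Y\in\mathcal{K}(I)$ is immediate: $\tr(Y)=\sum_i h(1)\tilde X_{ii}=r\,\tr(\tilde X)\geq 0$. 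The quadratic condition becomes
\[
(\tr Y)^2-\mu\,\tr(Y^2)\;=\;r^2(\tr\tilde X)^2\;-\;\mu\sum_{i,j} h(d_i/d_j)^2\,\tilde X_{ij}^2\;\geq\;0,
\]
and since the hypothesis supplies $(\tr\tilde X)^2\geq\mu\sum_{i,j}\tilde X_{ij}^2$, it suffices term by term to establish the uniform scalar bound $0\leq h(u)\leq r$ for every $u>0$.

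This scalar bound is the crux of the proof and the step I expect to be the main obstacle. The route I would take is the substitution $u=e^{2s}$: a short rearrangement using $e^{2rs}-1=2e^{rs}\sinh(rs)$ and $e^{2s}-1=2e^s\sinh(s)$ collapses $h$ to the clean form
\[
h(e^{2s})\;=\;\frac{\sinh(rs)}{\sinh(s)}.
\]
The desired inequality then reads $\sinh(rs)\leq r\sinh(s)$ for $s\geq 0$, which is a one-line Jensen argument using the convexity of $\sinh$ on $[0,\infty)$ together with $\sinh(0)=0$: $\sinh(rs)=\sinh\bigl(r\cdot s+(1-r)\cdot 0\bigr)\leq r\sinh(s)$. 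Oddness of $\sinh$ extends the conclusion to $s<0$, and $h\geq 0$ follows from the common sign of $u^r-1$ and $u-1$. Combining these ingredients yields $df_r\vert_\Sigma\,\mathcal{K}^\mu(\Sigma)\subseteq\mathcal{K}^\mu(\Sigma^r)$ for every $\mu\in(0,n)$ and every $\Sigma\in S^+_n$, which by the differential positivity criterion is exactly the monotonicity of $f_r$ with respect to each of the affine-invariant partial orders in the quadratic family.
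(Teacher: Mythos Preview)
Your proof is correct and takes a genuinely different, more direct route than the paper. The paper proceeds in three stages: it first establishes differential positivity of $f_{1/p}$ for integer $p$ via the generalized Sylvester equation for $df_{1/p}$ together with the trace inequality $\tr[(AB)^{2m}]\leq\tr[A^{2m}B^{2m}]$; it then extends to rational exponents $q/p$ by a comparison argument showing that the cone-expansion under $f_p$ is monotone in $p$ (equation~(\ref{rational})), which relies on Proposition~\ref{trace prop} and a fairly intricate bookkeeping of coefficients $\beta_k$; finally it passes to real $r$ by density. Your argument bypasses all of this machinery. The Daleckii--Krein formula at a diagonal point converts the differential positivity condition into an entrywise scaling $Y_{ij}=h(d_i/d_j)\tilde X_{ij}$, and the substitution $u=e^{2s}$ collapses the required bound $0\leq h(u)\leq r$ to the single scalar inequality $\sinh(rs)\leq r\sinh s$, which is a one-line consequence of the convexity of $\sinh$ on $[0,\infty)$. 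What your approach buys is brevity and transparency: the precise role of the hypothesis $r\in[0,1]$ is isolated in a Jensen step, and the argument handles all real $r$ at once with no density passage. The paper's longer route, on the other hand, produces as a by-product the structural inequalities of Proposition~\ref{trace prop} and (\ref{rational}), which are then reused in the subsequent remark to exhibit an explicit point where the contraction is strict and hence to conclude non-monotonicity for $r>1$; your framework would recover this just as easily by noting that $\sinh(rs)>r\sinh s$ for $r>1$ and $s\neq 0$.
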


This result suggests that the monotonicity of the map $f_r: \Sigma \mapsto \Sigma^r$ for $r\in(0,1)$ is intimately connected to the affine-invariant geometry of $S^+_n$ and not its translational geometry. The structure of the proof of Theorem \ref{GenLownerHeinz} is as follows.
We first prove that the map $f_{1/p}:\Sigma\mapsto\Sigma^{1/p}$ is monotone for any $p\in\mathbb{N}$. We then extend this result to maps $f_{q/p}:\Sigma\mapsto\Sigma^{q/p}$ for rational numbers $q/p\in\mathbb{Q}\cap(0,1)$, before arriving at the full result via a density argument.
We prove monotonicty by establishing differential positivity in each case. To prove the monotonicity of  $f_{1/p}:\Sigma\mapsto\Sigma^{1/p}$, $p\in\mathbb{N}$, we only need the following lemma \cite{Yang2002}.

\begin{lemma} \label{inequalitylemma}
If $A$ and $B$ are Hermitian $n\times n$ matrices, then
\begin{equation}
\operatorname{tr}[(AB)^{2m}]\leq \tr[A^{2m}B^{2m}], \quad m\in\mathbb{N}.
\end{equation}
\end{lemma}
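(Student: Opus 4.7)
My plan is to establish the base case by a commutator identity, reduce general $m$ via Cauchy--Schwarz to a Frobenius-norm inequality, and then settle the latter by singular-value majorization.

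\emph{Base case $m=1$.} Since $A,B$ are Hermitian, $(AB-BA)^*=BA-AB=-(AB-BA)$, so expanding and using cyclic invariance of the trace yields
\[
0\leq\|AB-BA\|_F^2=\tr\bigl[(AB-BA)(AB-BA)^*\bigr]=2\,\tr[A^2B^2]-2\,\tr[(AB)^2],
\]
which is exactly the $m=1$ inequality.

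\emph{Cauchy--Schwarz reduction.} For general $m$, $\tr[(AB)^{2m}]$ is real because its complex conjugate is $\tr\bigl[((AB)^{2m})^*\bigr]=\tr[(BA)^{2m}]=\tr[(AB)^{2m}]$ by cyclicity. Applying the Frobenius Cauchy--Schwarz inequality $|\tr[X^*Y]|^2\leq\tr[X^*X]\,\tr[Y^*Y]$ with $X=(BA)^m$ and $Y=(AB)^m$, and noting that $((AB)^m)^*=(BA)^m$, I see that both trace factors on the right collapse to $\tr[(AB)^m(BA)^m]=\|(AB)^m\|_F^2$, so
\[
\tr[(AB)^{2m}]\leq\|(AB)^m\|_F^2.
\]
A further cyclic rearrangement gives $\tr[A^{2m}B^{2m}]=\tr[A^mB^{2m}A^m]=\|A^mB^m\|_F^2$, and hence the lemma reduces to the singular-value bound
\[
\|(AB)^m\|_F\;\leq\;\|A^mB^m\|_F. \tag{$\star$}
\]

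\emph{Main obstacle.} The hard part is $(\star)$. My plan is to derive it from the log-majorization
\[
\prod_{i=1}^k\sigma_i\bigl((AB)^m\bigr)\;\leq\;\prod_{i=1}^k\sigma_i(A^mB^m),\qquad k=1,\dots,n,
\]
which for Hermitian $A,B$ should follow by iterating Horn's singular-value inequality $\prod_{i=1}^k\sigma_i(XY)\leq\prod_{i=1}^k\sigma_i(X)\sigma_i(Y)$ and invoking $\sigma_i(A^m)=\sigma_i(A)^m$ (valid since $A$ is Hermitian, so that its singular values coincide with the absolute values of its eigenvalues). Weak log-majorization of non-negative reals passes to weak majorization of their squares by convexity of $x\mapsto x^2$ on $[0,\infty)$, and summing over $i$ delivers $(\star)$. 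The subtle point is that Horn's bound by itself controls \emph{both} sides by the common upper bound $\prod_i\sigma_i(A^m)\sigma_i(B^m)$, so to separate them one must exploit the Hermiticity of $A$ and $B$ more finely, for instance through the Lieb--Thirring/Araki rearrangement applied to the positive-semidefinite matrices $A^{2m}$ and $B^{2m}$. Carrying out this majorization cleanly is the classical content of Yang's argument.
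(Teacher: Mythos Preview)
The paper does not prove this lemma at all; it is quoted from \cite{Yang2002} and used as a black box. So there is no in-paper argument to compare yours against, and I can only assess your proposal on its own merits.

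Your $m=1$ commutator computation and your Cauchy--Schwarz reduction to $(\star)$ are correct and clean: writing $M=(AB)^m$ one indeed has $\tr[(AB)^{2m}]=\tr[M^2]\leq\|M\|_F^2$, while $\tr[A^{2m}B^{2m}]=\|A^mB^m\|_F^2$. The roadmap you sketch for $(\star)$ is also the right one. Combining Horn's inequality $\sigma\bigl((AB)^m\bigr)\prec_{\log}\sigma(AB)^m$ with Araki's log-majorization applied to the positive semidefinite pair $(A^2,B^2)$ with exponent $m$ gives
\[
\sigma(AB)^{2m}=\sigma\bigl(|A|\,|B|\bigr)^{2m}\;\prec_{\log}\;\sigma\bigl(|A|^m|B|^m\bigr)^2=\sigma(A^mB^m)^2,
\]
where the outer equalities hold because only $A^2,B^2,A^{2m},B^{2m}$ enter the relevant Gram matrices. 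Chaining yields $\sigma((AB)^m)^2\prec_{\log}\sigma(A^mB^m)^2$, hence weak majorization, hence $(\star)$.

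The genuine gap is that you do not actually carry this out. You correctly flag that Horn alone bounds both sides by the same quantity, then gesture at Lieb--Thirring/Araki, and close with ``carrying out this majorization cleanly is the classical content of Yang's argument.'' That last sentence is circular: Yang's argument \emph{is} the lemma you are trying to prove, so deferring the crux back to Yang leaves the proof unfinished precisely at the step that carries the content. If you replace that sentence with the two-line Araki application above (being explicit that it is $A^2,B^2$ with power $m$, not $A^{2m},B^{2m}$, that go into the inequality), your proposal becomes a complete and self-contained proof.
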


The proof of the theorem for rational exponents is based on a simple observation whose proof nonetheless requires a few technical steps that are based on Proposition \ref{trace prop}, which itself relies on Lemma \ref{trace ineq 2} established in \cite{Bourin1999,Fujii2005}.

\begin{lemma}\label{trace ineq 2}
Let $F, G$ be real-valued functions on some domain $D\subseteq\mathbb{R}$ and $\Sigma$, $X$ be Hermitian matrices, such that the spectrum of $\Sigma$ is contained in $D$. If $(F,G)$ is an antimonotone pair so that $(F(a)-F(b))(G(a)-G(b))\leq 0$ for all $a,b\in D$, then
\begin{equation}
\tr\left[F(\Sigma)XG(\Sigma)X\right]\geq\tr\left[F(\Sigma)G(\Sigma)X^2\right].
\end{equation}
\end{lemma}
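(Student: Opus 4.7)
The natural plan is to diagonalize $\Sigma$ and reduce the matrix trace inequality to a scalar inequality in the eigenvalues that can be handled term by term using the antimonotonicity hypothesis. Since $\Sigma$ is Hermitian, write $\Sigma = U \Lambda U^{*}$ with $\Lambda = \diag(\lambda_{1}, \ldots, \lambda_{n})$, $\lambda_{i} \in D$, and $U$ unitary. Then $F(\Sigma) = U F(\Lambda) U^{*}$ and $G(\Sigma) = U G(\Lambda) U^{*}$, where $F(\Lambda)$ and $G(\Lambda)$ are the diagonal matrices with entries $F(\lambda_{i})$ and $G(\lambda_{i})$. Define the Hermitian matrix $Y = U^{*} X U$ with entries $y_{ij}$, so the two traces become
\begin{equation}
\tr[F(\Sigma) X G(\Sigma) X] = \sum_{i,j} F(\lambda_{i}) G(\lambda_{j}) |y_{ij}|^{2},
\qquad
\tr[F(\Sigma) G(\Sigma) X^{2}] = \sum_{i,j} F(\lambda_{i}) G(\lambda_{i}) |y_{ij}|^{2}.
\end{equation}

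The key step is to exploit the symmetry $|y_{ij}|^{2} = |y_{ji}|^{2}$ (which holds because $Y$ is Hermitian) to symmetrize both sums in $(i,j)$. Doubling each side and relabeling gives
\begin{equation}
2\,\tr[F(\Sigma) X G(\Sigma) X] = \sum_{i,j} \bigl[F(\lambda_{i}) G(\lambda_{j}) + F(\lambda_{j}) G(\lambda_{i})\bigr] |y_{ij}|^{2},
\end{equation}
\begin{equation}
2\,\tr[F(\Sigma) G(\Sigma) X^{2}] = \sum_{i,j} \bigl[F(\lambda_{i}) G(\lambda_{i}) + F(\lambda_{j}) G(\lambda_{j})\bigr] |y_{ij}|^{2}.
\end{equation}

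Subtracting, the integrand collapses into a perfect bilinear difference:
\begin{equation}
2\bigl(\tr[F(\Sigma) X G(\Sigma) X] - \tr[F(\Sigma) G(\Sigma) X^{2}]\bigr) = -\sum_{i,j} \bigl(F(\lambda_{i}) - F(\lambda_{j})\bigr)\bigl(G(\lambda_{i}) - G(\lambda_{j})\bigr) |y_{ij}|^{2}.
\end{equation}
Since $\lambda_{i}, \lambda_{j} \in D$, the antimonotonicity hypothesis applied with $a = \lambda_{i}$, $b = \lambda_{j}$ makes each factor $(F(\lambda_{i}) - F(\lambda_{j}))(G(\lambda_{i}) - G(\lambda_{j}))$ nonpositive, so every summand above is nonnegative. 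This yields the required inequality.

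There is no serious obstacle in this approach; the only thing to notice is the symmetrization trick, without which the pointwise sign of the summands is not obvious. One should be mildly careful about the complex versus real setting (use $|y_{ij}|^{2} = y_{ij} \overline{y_{ij}} = y_{ij} y_{ji}$), but once the computation is organized by symmetrizing over the pair $(i,j)$, the hypothesis on $(F,G)$ is applied at the scalar level and the proof is complete.
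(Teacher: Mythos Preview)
Your proof is correct. The paper does not give its own proof of this lemma but cites \cite{Bourin1999,Fujii2005}; the diagonalization and $(i,j)$-symmetrization argument you wrote is precisely the standard proof found in those references.
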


\begin{proposition}\label{trace prop}
If $\Sigma\in S^+_n$ and $X$ is a Hermitian matrix, then
\begin{equation} \label{trace prop eq}
\tr\left(\Sigma^{-2-k}X\Sigma^{k}X\right)\geq \tr\left(\Sigma^{-1-k}X\Sigma^{-1+k}X\right),
\end{equation}
for integers $k\geq 0$.
\end{proposition}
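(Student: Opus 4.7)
The plan is to reduce (\ref{trace prop eq}) to a single application of Lemma \ref{trace ineq 2} by means of a symmetric substitution that absorbs the asymmetric weighting of $X$ by powers of $\Sigma$. As written, neither side of (\ref{trace prop eq}) is in the template form $\tr[F(\Sigma)WG(\Sigma)W]$ versus $\tr[F(\Sigma)G(\Sigma)W^{2}]$ appearing in the lemma, because $X$ itself carries no $\Sigma$-dressing. However, the exponents $-2-k,k$ on the left and $-1-k,-1+k$ on the right both sum to $-2$, so heuristically the proposition asserts that spreading these exponents further apart increases the trace, which is precisely the flavor of information the lemma provides.

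Concretely, I would introduce the Hermitian matrix $Y := \Sigma^{-(1+k)/2}\,X\,\Sigma^{-(1+k)/2}$, so that $X = \Sigma^{(1+k)/2}\,Y\,\Sigma^{(1+k)/2}$. Substituting this expression for $X$ on both sides of (\ref{trace prop eq}), using the commutativity of powers of $\Sigma$ with one another, and cycling the trace, a short exponent-bookkeeping step should reduce the claim to
\begin{equation*}
\tr\!\left(\Sigma^{-1}\,Y\,\Sigma^{1+2k}\,Y\right)\;\geq\;\tr\!\left(\Sigma^{-1}\,\Sigma^{1+2k}\,Y^{2}\right).
\end{equation*}

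I would then apply Lemma \ref{trace ineq 2} with $F(t)=t^{-1}$ and $G(t)=t^{1+2k}$ on the domain $(0,\infty)$, taking the Hermitian matrix in the lemma to be $Y$. Since $F$ is strictly decreasing and, for every $k\geq 0$, $G$ is strictly increasing, $(F,G)$ is an antimonotone pair, and the lemma delivers exactly the reduced inequality. Undoing the substitution then recovers (\ref{trace prop eq}). The main obstacle is identifying the correct substitution: once the symmetric conjugation by $\Sigma^{(1+k)/2}$ is in hand, the proof collapses onto one invocation of Lemma \ref{trace ineq 2} plus a routine check of the exponent arithmetic, with no need for induction on $k$ or any spectral decomposition of $\Sigma$.
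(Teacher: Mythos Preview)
Your proposal is correct and follows essentially the same approach as the paper: both proofs introduce a symmetric conjugation of $X$ by a suitable power of $\Sigma$ and then invoke Lemma~\ref{trace ineq 2} once. The only cosmetic difference is the bookkeeping: the paper conjugates by $\Sigma^{(k-1)/2}$ and applies the lemma with $F(x)=x^{-1-2k}$, $G(x)=x$, whereas you conjugate by $\Sigma^{-(1+k)/2}$ and take $F(t)=t^{-1}$, $G(t)=t^{1+2k}$; these two choices are related by a further conjugation by $\Sigma^{-k}$ and yield the same inequality.
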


\begin{proof}
Define $F,G:(0,\infty)\rightarrow\mathbb{R}$ by $F(x):=x^{-1-2k}$ and $G(x):=x$, and note that $(F(a)-F(b))(G(a)-G(b))\leq 0$  for all $a,b> 0$. Let  $\Sigma\in S^+_n$ and $X$ be a Hermitian matrix. Then, we have
\begin{align}
\tr\left(\Sigma^{-2-k}X\Sigma^{k}X\right) &= \tr\left[\Sigma^{-1-2k}\left(\Sigma^{\frac{-1+k}{2}}X\Sigma^{\frac{-1+k}{2}}\right)\Sigma \left(\Sigma^{\frac{-1+k}{2}}X\Sigma^{\frac{-1+k}{2}}\right)\right] \nonumber \\
& \geq  \tr\left[\Sigma^{-2k}\left(\Sigma^{\frac{-1+k}{2}}X\Sigma^{\frac{-1+k}{2}}\right) \left(\Sigma^{\frac{-1+k}{2}}X\Sigma^{\frac{-1+k}{2}}\right)\right] \\
& = \tr\left(\Sigma^{-1-k}X\Sigma^{-1+k}X\right),
\end{align}
following an application of Lemma \ref{trace ineq 2} with the Hermitian matrix replaced by
$\Sigma^{\frac{k-1}{2}}X\Sigma^{\frac{k-1}{2}}$.
\end{proof}

\begin{proof}[Proof of Theorem \ref{GenLownerHeinz}]:
The differential $df_{1/p}\vert_{\Sigma}:T_{\Sigma}S^+_n\to T_{f_{1/p}(\Sigma)}S^+_n$ of $f_{1/p}$ satisfies the generalized Sylvester equation
\begin{equation} \label{gen syl}
\sum_{j=0}^{p-1}(\Sigma^{1/p})^{p-1-j}(df_{1/p}\vert_{\Sigma}X)(\Sigma^{1/p})^{j}=X,
\end{equation}
for every $X\in T_{\Sigma}S^+_n$. Thus,
\begin{equation}  \label{rootdiff}
\sum_{j=0}^{p-1}(\Sigma^{1/p})^{p-1-j-\frac{1}{2}p}(df_{1/p}\vert_{\Sigma}X)(\Sigma^{1/p})^{j-\frac{1}{2}p}=\Sigma^{-1/2}X\Sigma^{-1/2}.
\end{equation}
Taking the trace of (\ref{rootdiff}) yields
\begin{align}
\tr\left(\sum_{j=0}^{p-1}(\Sigma^{1/p})^{\frac{1}{2}p-1-j}(df_{1/p}\vert_{\Sigma}X)(\Sigma^{1/p})^{j-\frac{1}{2}p}\right) &= \tr(\Sigma^{-1/2}X\Sigma^{-1/2})  \\
\implies \quad \tr\left(\sum_{j=0}^{p-1}\Sigma^{-1/p}(df_{1/p}\vert_{\Sigma}X)\right) &= \tr(\Sigma^{-1}X).  \label{trace1}
\end{align}
That is,
$p\tr\left((f_{1/p}(\Sigma))^{-1}(df_{1/p}\vert_{\Sigma}X)\right) = \tr(\Sigma^{-1}X)$, for all $X\in T_{\Sigma}S^+_n$. Now taking the trace of the square of (\ref{rootdiff}), we obtain
\begin{equation} \label{rootdiff2}
\tr\left(\sum_{i,j=0}^{p-1}(\Sigma^{1/p})^{i-j-1}(df_{1/p}\vert_{\Sigma}X)(\Sigma^{1/p})^{j-i-1}(df_{1/p}\vert_{\Sigma}X)\right)=\tr(\Sigma^{-1}X\Sigma^{-1}X).
\end{equation}
The left-hand side of (\ref{rootdiff2}) can be rewritten as
\begin{align}
\sum_{i,j=0}^{p-1}&\tr\left[\left((\Sigma^{1/p})^{\frac{j-i-1}{2}}(df_{1/p}\vert_{\Sigma}X)(\Sigma^{1/p})^{\frac{j-i-1}{2}}\right)^2 \left((\Sigma^{1/p})^{i-j}\right)^2\right] \\
  & \geq \quad   \sum_{i,j=0}^{p-1}\tr\left[\left((\Sigma^{1/p})^{\frac{j-i-1}{2}}(df_{1/p}\vert_{\Sigma}X)(\Sigma^{1/p})^{\frac{j-i-1}{2}}(\Sigma^{1/p})^{i-j}\right)^2\right] \\
  & = \quad \sum_{i,j=0}^{p-1}\tr\left[\Sigma^{-1/p}(df_{1/p}\vert_{\Sigma}X)\Sigma^{-1/p}(df_{1/p}\vert_{\Sigma}X)\right] \\
  & = \quad p^2\tr\left[(f_{1/p}(\Sigma))^{-1}(df_{1/p}\vert_{\Sigma}X)(f_{1/p}(\Sigma))^{-1}(df_{1/p}\vert_{\Sigma}X)\right],
\end{align}
where the inequality follows from an application of Lemma \ref{inequalitylemma}. Thus, 
\begin{equation}
\tr\left[\left((f_{1/p}(\Sigma))^{-1}(df_{1/p}\vert_{\Sigma}X)\right)^2\right]\leq \frac{1}{p^2} \tr(\Sigma^{-1}X\Sigma^{-1}X).
\end{equation}
Combined with (\ref{trace1}), this implies that
\begin{align}
[\tr\left((f_{1/p}(\Sigma))^{-1}(df_{1/p}\vert_{\Sigma}X)\right)]^2-\mu\tr\left[\left((f_{1/p}(\Sigma))^{-1}(df_{1/p}\vert_{\Sigma}X)\right)^2\right] & \nonumber  \\
\geq  \frac{1}{p^2}\left([\tr(\Sigma^{-1}X)]^2-\mu\tr(\Sigma^{-1}X\Sigma^{-1}X)\right) \geq 0, &
\end{align}
for all $X\in\mathcal{K}(\Sigma)$. That is, $(df_{1/p}\vert_{\Sigma})\mathcal{K}(\Sigma)\subseteq\mathcal{K}(f_{1/p}(\Sigma))$ for any choice of $\mu$. 

This result can be extended to all rational powers $q/p\in\mathbb{Q}\cap[0,1]$ by combining two observations. First, since the inverse of the $p$-th root matrix function $f_{1/p}$ is the $p$-th power function $f_p:\Sigma\mapsto\Sigma^p$ and $f_{1/p}$ contracts the invariant cone field $\mathcal{K}$, $f_p$ must expand $\mathcal{K}$. Second, this expansion is greater for larger $p$. That is,
for positive integers $p_1\leq p_2$, 
\begin{equation}  \label{rational}
\left(d\tau_{\Sigma^{-1/2p_1}}\vert_{\Sigma^{p_1}}\circ df_{p_1}\vert_{\Sigma}\right)\mathcal{K}(\Sigma)\subseteq \left(d\tau_{\Sigma^{-1/2p_2}}\vert_{\Sigma^{p_2}}\circ df_{p_2}\vert_{\Sigma}\right)\mathcal{K}(\Sigma).
\end{equation}
Thus, the map $f_{q/p}=f_q\circ f_{1/p}$ is differentially positive, since the contraction of the cone field by $f_{1/p}$ will dominate the expansion of the cone field by $f_q$ for $p\geq q$. Note that the contractions and expansions referred to here need not be strict for the argument to hold. To prove (\ref{rational}), it is sufficient to show that the map $f_{p+1}$ expands the cone field at least as much as $f_{p}$ for any $p\in\mathbb{N}$. This is done by showing that
\begin{equation} \label{expansion}
df_p\vert_{\Sigma}X\in\partial\mathcal{K}(\Sigma^p) \implies df_{p+1}\vert_{\Sigma}X\notin\operatorname{int}\mathcal{K}(\Sigma^{p+1}),
\end{equation}
for any $\Sigma\in S^+_n$ and $X\in T_{\Sigma}S^+_n$, where $\partial\mathcal{K}(\Sigma^p)$ denotes the boundary of $\mathcal{K}(\Sigma^p)$.
Note that $df_p\vert_{\Sigma}X\in\partial\mathcal{K}(\Sigma^p)$ implies that $X\in\mathcal{K}(\Sigma)$, since $f_p$ expands $\mathcal{K}$. The implication in (\ref{expansion}) shows that the expansion of the cone field by $f_{p+1}$ is at least as great as that of $f_p$ by linearity of the differential maps. Using $\tr(f_{p}(\Sigma)^{-1}df_p\vert_{\Sigma}X)=p\tr(\Sigma^{-1}X)$, we see that 
$df_p\vert_{\Sigma}X\in\partial\mathcal{K}(\Sigma^p)$ is equivalent to 
\begin{equation} \label{expansion2}
p^2\tr(\Sigma^{-1}X)^2=\mu\sum_{i,j=0}^{p-1}\tr\left(\Sigma^{-1+i-j}X\Sigma^{-1+j-i}X\right).
\end{equation}
Assuming (\ref{expansion2}), we have
\begin{align} \label{expansion3}
&\left[\tr\left((f_{p+1}(\Sigma))^{-1}(df_{p+1}\vert_{\Sigma}X)\right)\right]^2-\mu\tr\left[\left((f_{p+1}(\Sigma))^{-1}(df_{p+1}\vert_{\Sigma}X)\right)^2\right]  \\
& = (p+1)^2\tr(\Sigma^{-1}X)^2-\mu\sum_{i,j=0}^{p}\tr\left(\Sigma^{-1+i-j}X\Sigma^{-1+j-i}X\right) \nonumber \\
& = \frac{\mu(p+1)^2}{p^2}\sum_{i,j=0}^{p-1}\tr\left(\Sigma^{-1+i-j}X\Sigma^{-1+j-i}X\right) - \mu\sum_{i,j=0}^{p}\tr\left(\Sigma^{-1+i-j}X\Sigma^{-1+j-i}X\right), \nonumber
\end{align}
where the last equation follows from substitution using (\ref{expansion2}). Using the simplification
$\sum_{i,j=0}^{p-1}\tr\left(\Sigma^{-1+i-j}X\Sigma^{-1+j-i}X\right) =\sum_{k=0}^{p-1}\alpha_k\tr\left(\Sigma^{-k-1}X\Sigma^{k-1}X\right)$, where $\alpha_0=p$ and $\alpha_k=2(p-k)$ for $k\geq 1$, (\ref{expansion3}) reduces to 
\begin{align}
\mu\left[\left(p\frac{(p+1)^2}{p^2}-(p+1)\right)\tr\left(\Sigma^{-1}X\Sigma^{-1}X\right) \right.  \nonumber \\
+\frac{(p+1)^2}{p^2}\sum_{k=1}^{p-1}2(p-k)&\tr\left(\Sigma^{-1-k}X\Sigma^{-1+k}X\right) \nonumber \\
-\left. \sum_{k=1}^p 2(p+1-k)\tr\left(\Sigma^{-1-k}X\Sigma^{-1+k}X\right)\right] 
\end{align}
\begin{align}
 = \mu&\left[\frac{p+1}{p}\tr\left(\Sigma^{-1}X\Sigma^{-1}X\right)  \right.  \nonumber \\
& + \sum_{k=1}^{p-1}\beta_k\tr\left(\Sigma^{-1-k}X\Sigma^{-1+k}X\right)
  \left. -2\tr\left(\Sigma^{-1-p}X\Sigma^{-1+p}X\right)\right], \label{expansion4}
\end{align}
where
\begin{equation}
\beta_k=2\frac{(p+1)^2(p-k)}{p^2}-2(p+1-k).
\end{equation}
We find that $\beta_k\geq 0$  if and only if $k\leq l := \lfloor p/2\rfloor$, where $\lfloor\cdot \rfloor$ identifies the integer part of its argument.
Thus, through repeated applications of Proposition \ref{trace prop}, we see that (\ref{expansion4}) is less than or equal to
\begin{align} \label{tech detail}
\mu\left(\frac{p+1}{p}+\sum_{k=1}^{l}\beta_k\right)\tr\left(\Sigma^{-1-l}X\Sigma^{-1+l}X\right) 
-\mu\left(2+\sum_{k=l+1}^{p-1}|\beta_k|\right)\tr\left(\Sigma^{-2-l}X\Sigma^{l}X\right) 
\nonumber \\
= \mu \left(2+\frac{(p-l-1)(l+2pl-p)}{p^2}\right)\left[\tr\left(\Sigma^{-1-l}X\Sigma^{-1+l}X\right)-\tr\left(\Sigma^{-2-l}X\Sigma^{l}X\right)\right],
\end{align}
which is nonpositive by a final application of Proposition \ref{trace prop}. This completes the proof of (\ref{expansion}).

Finally, we extend the result to all real exponents $r\in [0,1]$. Assume for a contradiction that there exists some $r\in(0,1)$ and $\Sigma_1,\Sigma_2\in S^+_n$ such that $\Sigma_1\geq \Sigma_2$ and $\Sigma_1^r < \Sigma_2^r$. Define 
$E=\{x\in (0,1): \Sigma_1^x < \Sigma_2^x\}$ and note that $E\neq \emptyset$ since $r\in E$. As $E$ is an open set in $\mathbb{R}$, there exists some $s\in\mathbb{Q}\cap E$ so that $\Sigma_1^s < \Sigma_2^s$, which is a contradiction. Therefore, $f_{r}$ is monotone for all $r\in [0,1]$ with respect to any of the affine-invariant orders parametrized by $\mu$.
\end{proof}

\begin{remark} The geometric insight provided by differential positivity clarifies the duality between the monotonicity of the function $f_r:\Sigma\mapsto\Sigma^{r}$ for $0 < r  < 1$ and its non-monotonicity for $r>1$, which may seem somewhat mysterious otherwise. Specifically, since the inverse of the function $f_r$ is given by $f_{1/r}$, we see that if $f_r$ contracts affine-invariant cone fields for $r\in (0,1)$ at every point, then $f_{1/r}$ must expand the same cone fields. Indeed, if the contraction of $\mathcal{K}$ by $f_{r}$ is strict at some $\Sigma\in S^+_n$, then 
$f_{1/r}$ cannot be differentially positive with respect to $\mathcal{K}$ and so is not monotone with respect to $\leq_{\mathcal{K}}$. See Figure \ref{fig:inverse}. 
To show that this is indeed the case for any of the affine-invariant cone fields (\ref{quadField}), we note that at any $\Sigma\in S^+_n$, $X_{\Sigma}=\Sigma\in T_{\Sigma}S^+_n$ lies in the interior of $\mathcal{K}(\Sigma)$, since $(\tr(\Sigma^{-1}X_{\Sigma}))^2-\mu\tr(\Sigma^{-1}X_{\Sigma}\Sigma^{-1}X_{\Sigma})=n^2-\mu n\ > 0$ and $\tr(\Sigma^{-1}X_{\Sigma})=\tr(I)=n > 0$ for $\mu\in (0,n)$. Let $\Sigma=\operatorname{diag}(\sigma_1,\sigma_2,\ldots,\sigma_n)$ be any diagonal matrix in $S^+_n$ with $\sigma_1>\sigma_2$. As $X_{\Sigma}=\Sigma\in\operatorname{int}\mathcal{K}(\Sigma)$, there exists some $\delta>0$ such that 
\begin{equation}
X=(x_{ij}) =
\begin{pmatrix}
\sigma_1 & \delta \cr
\delta & \sigma_2 \cr
& & \sigma_3 \cr
&&& \ddots \cr 
&&&&\sigma_n \cr
\end{pmatrix}
\end{equation}
lies on the boundary of $\partial \mathcal{K}(\Sigma)$. Specifically, we find that
\begin{align}
&\left(\tr(\Sigma^{-1}X)\right)^2-\mu\tr\left(\Sigma^{-1}X\Sigma^{-1}X\right)\\
&= \left(\sum_i\frac{x_{ii}}{\sigma_i}\right)^2 -\mu\left(\sum_i\frac{x^2_{ii}}{\sigma^2_i}+\frac{2}{\sigma_1\sigma_2}\delta^2\right) = n^2 - \mu\left(n+\frac{2}{\sigma_1\sigma_2}\delta^2\right)
\end{align}
 vanishes when
 \begin{equation}
 \delta^2 = \frac{n(n-\mu)\sigma_1\sigma_2}{2\mu}.
 \end{equation}
Now for this choice of $X$, the inequality (\ref{trace prop eq}) of Proposition \ref{trace prop} with $k=0$ becomes 
strict as
\begin{equation}
\tr\left(\Sigma^{-1}X\Sigma^{-1}X\right) 
=n+\frac{2}{\sigma_1\sigma_2}\delta^2 
< n + \left(\frac{1}{\sigma_1^{2}}+\frac{1}{\sigma_2^{2}}\right)\delta^2 = \tr(\Sigma^{-2}X^2),
\end{equation}
since $(1/\sigma_1-1/\sigma_2)^2 > 0$. As this inequality is used to derive (\ref{tech detail}), which is used to prove (\ref{rational}), it follows that the contraction of $\mathcal{K}$ by $f_r$ is strict at some $\Sigma\in S^+_n$ for $r\in (0,1)$. Therefore, $f_r$ cannot be monotone with respect to $\leq_{\mathcal{K}}$ for $r>1$. 
\end{remark}

\begin{figure}
\centering
\includegraphics[width=0.6\linewidth]{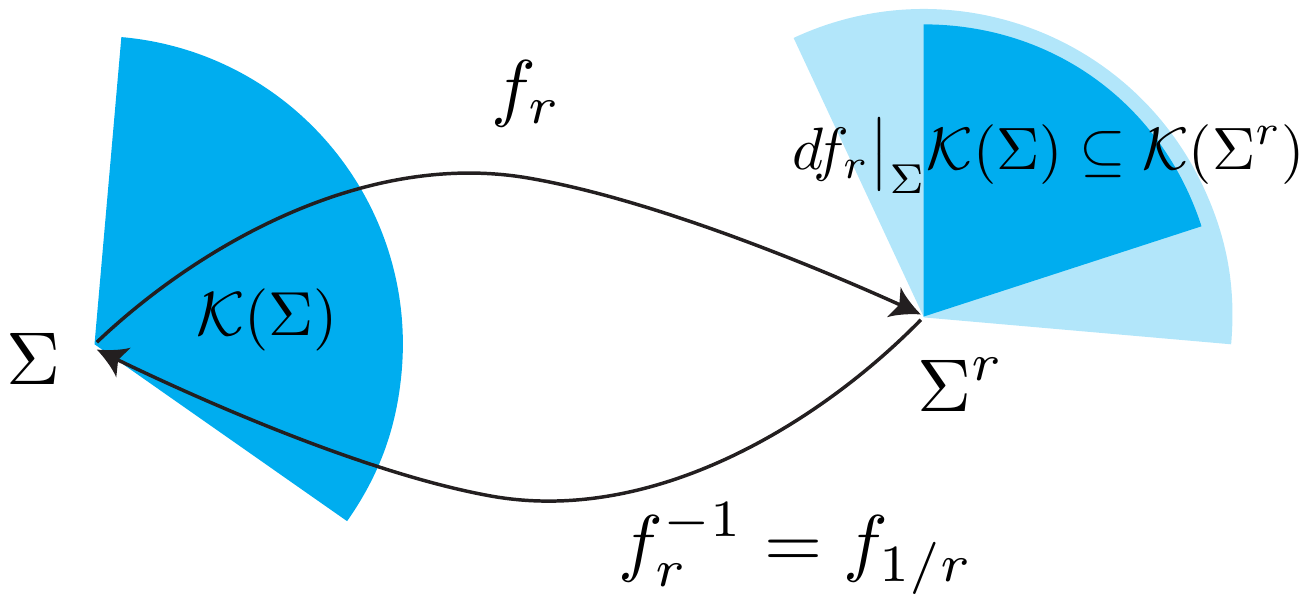}
  \caption{Contraction of affine-invariant cone fields by $f_r:\Sigma\mapsto\Sigma^r$ for $0 < r < 1$ corresponds to expansion of affine-invariant cone fields by the inverse map $f_r^{-1}=f_{1/r}:\Sigma\mapsto\Sigma^{1/r}$.
  }
  \label{fig:inverse}
\end{figure}

\subsection{Matrix inversion}

Consider the matrix inversion map $f(\Sigma)=\Sigma^{-1}$. The differential $df\vert_{\Sigma}:T_{\Sigma}S^+_n\to T_{\Sigma^{-1}}S^+_n$ of $f$ is given by 
\begin{equation}
df\vert_{\Sigma}X=-\Sigma^{-1}X\Sigma^{-1}.
\end{equation}
To show this, it is sufficient to consider the geodesic from $\Sigma$ in the
direction $X\in T_{\Sigma}S^+_n$ given by 
\begin{equation}
\gamma(t)=\Sigma^{1/2}\exp(t\Sigma^{-1/2}X\Sigma^{-1/2})\Sigma^{1/2},
\end{equation}
and note that $(f\circ\gamma)(t)=\Sigma^{-1/2}\exp(-t\Sigma^{-1/2}X\Sigma^{-1/2})\Sigma^{-1/2}$ so that 
\begin{equation}
(f\circ\gamma)'(0) = \Sigma^{-1/2}(-\Sigma^{-1/2}X\Sigma^{-1/2})e^{-t\Sigma^{1/2}X\Sigma^{1/2}}\Sigma^{-1/2}\big\vert_{t=0}=-\Sigma^{-1}X\Sigma^{-1}.
\end{equation}
Thus,
$
\tr(\Sigma\, (df\vert_{\Sigma}X)) =-\tr(\Sigma^{-1}X)$ and $
\tr\left[(\Sigma\, df\vert_{\Sigma}X)^2\right] = \tr(\Sigma^{-1}X\Sigma^{-1}X)$. Therefore, noting the conditions in (\ref{quadField}), it is clear that $\Sigma\mapsto \Sigma^{-1}$ reverses the ordering of positive definite matrices for any of the affine-invariant orders since
\begin{equation}
\tr((f(\Sigma))^{-1}(df\vert_{\Sigma}X)) =-\tr(\Sigma^{-1}X).
\end{equation}
That is, 
\begin{equation}
\Sigma_1 \geq_{\mathcal{K}} \Sigma_2 \quad \implies \quad \Sigma_2^{-1} \geq_{\mathcal{K}} \Sigma_1^{-1}, 
\end{equation}
for any of the affine-invariant cone fields $\mathcal{K}$ in (\ref{quadField}).

\subsection{Scaling and congruence transformations}

Consider the function $S_{\lambda}:S^+_n\to S^+_n$ defined by $S_{\lambda}(\Sigma)=\lambda \Sigma$, where $\lambda>0$ is a scalar. The differential $dS_{\lambda}\vert_{\Sigma}:T_{\Sigma}S^+_n\to T_{\lambda\Sigma}S^+_n$ is given by $dS_{\lambda}\vert_{\Sigma} X=\lambda X$. Substituting into the formula for the family of quadratic affine-invariant cones (\ref{quadField}), we find that
\begin{align}
&\left[\tr\left(S_{\lambda}(\Sigma)^{-1}(dS_{\lambda}\vert_{\Sigma}X)\right)\right]^2 - \mu \tr\left(S_{\lambda}(\Sigma)^{-1}(dS_{\lambda}\vert_{\Sigma}X) S_{\lambda}(\Sigma)^{-1}(dS_{\lambda}\vert_{\Sigma}X)\right) \nonumber \\
& = \left[\tr\left(\frac{1}{\lambda}\Sigma^{-1}\lambda X\right)\right]^2-\mu\tr\left(\frac{1}{\lambda}\Sigma^{-1}\lambda X\right)^2 = [\tr(\Sigma^{-1}X)]^2-\mu\tr(\Sigma^{-1}X)^2 \geq 0
\end{align}
for any $X\in \mathcal{K}(\Sigma)$. Thus, $S_{\lambda}$ is differentially positive and so preserves the affine-invariant orders induced by any of the cone fields (\ref{quadField}). This is of course a special case of a more general result about congruence transformations $\tau_{A}(\Sigma)=A\Sigma A^T$, where $A\in GL(n)$. Congruence transformations can be thought of as generalizations of scaling transformations on $S^+_n$. The preservation of affine-invariant orders by congruence transformations follows by construction. If $\Sigma_1 \leq_{\mathcal{K}} \Sigma_2$ for some partial order induced by an affine-invariant cone field $\mathcal{K}$, then there exists a conal curve $\gamma$ from $\Sigma_1$ to $\Sigma_2$. It follows from the definition of affine-invariant cone fields that congruence transformations map conal curves to conal curves in $S^+_n$. That is, $\tau_A(\gamma(t))$ is a conal curve joining $\tau_A(\Sigma_1)$ to $\tau_A(\Sigma_2)$.

\subsection{Translations}

It is important to note that translations do not generally preserve an affine-invariant order unless the associated affine-invariant cone field happens to also be translation invariant. 

\begin{proposition}
Let $\leq_{\mathcal{K}}$ denote the partial order induced by an affine-invariant cone field $\mathcal{K}$ on $S^+_n$. If $\mathcal{K}$ is not translation invariant, then there exists a translation $T_C:S^+_n\to S^+_n$, $T_C(\Sigma)=\Sigma+C$ that does not preserve $\leq_{\mathcal{K}}$.
\end{proposition}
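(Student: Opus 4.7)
My plan is to produce an explicit pair $\Sigma_1 \leq_{\mathcal{K}} \Sigma_2$ together with a symmetric matrix $C$ such that the translated pair $T_C(\Sigma_1), T_C(\Sigma_2)$ fails to be ordered. The central tool will be the geodesic characterization of the affine-invariant order from Theorem \ref{geodesic conal curves thm}, which converts non-orderability into a single inclusion condition in $\mathcal{K}(I)$.

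First I would translate the hypothesis. By affine-invariance, $\mathcal{K}(\Sigma) = \Sigma^{1/2}\mathcal{K}(I)\Sigma^{1/2}$ for all $\Sigma\in S^+_n$, where both sides are viewed as subsets of the ambient space $\mathfrak{m}$ of symmetric matrices under the usual identification of tangent spaces. Translation invariance of $\mathcal{K}$ is precisely the statement that this set does not depend on $\Sigma$; therefore, since $\mathcal{K}$ is not translation invariant, there exists $A\in S^+_n$ with $\mathcal{K}(A)\neq \mathcal{K}(I)$, i.e.\ $A^{1/2}\mathcal{K}(I)A^{1/2}\neq \mathcal{K}(I)$.

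Next I use basic convex geometry: since $\mathcal{K}(I)$ and $\mathcal{K}(A)$ are two distinct closed, solid, pointed convex cones in the same ambient vector space, their interiors cannot coincide, and one can find $X$ lying in the interior of one but not in the other (by first picking any $p$ in the symmetric difference and then perturbing $p$ slightly towards an interior point of the cone containing $p$, using closedness of the complementary cone to stay outside it). After possibly swapping the roles of $I$ and $A$ (and the sign of $C$), I may assume $X\in\operatorname{int}\mathcal{K}(I)\setminus\mathcal{K}(A)$. I then set $\Sigma_1=I$, $\Sigma_2=I+tX$, and $C=A-I$, where $t>0$ is sufficiently small that both $I+tX$ and $A+tX$ remain in $S^+_n$.

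For the positive direction $\Sigma_1\leq_{\mathcal{K}}\Sigma_2$, the straight-line curve $\gamma(s)=I+stX$, $s\in[0,1]$, has tangent $tX$ throughout, and $tX\in\mathcal{K}(\gamma(s))$ for every $s$ once $t$ is small enough, by continuity of the cone field and openness of $\operatorname{int}\mathcal{K}(I)$. For the negative direction, Theorem \ref{geodesic conal curves thm} reduces the question of whether $A=T_C(\Sigma_1)\leq_{\mathcal{K}} A+tX=T_C(\Sigma_2)$ to whether $\log(I+tA^{-1/2}XA^{-1/2})\in\mathcal{K}(I)$. Since $X\notin\mathcal{K}(A)=A^{1/2}\mathcal{K}(I)A^{1/2}$, we have $A^{-1/2}XA^{-1/2}\notin\mathcal{K}(I)$; combined with the expansion $\log(I+tA^{-1/2}XA^{-1/2})=tA^{-1/2}XA^{-1/2}+O(t^2)$, the homogeneity of $\mathcal{K}(I)$, and its closedness, this forces $\log(I+tA^{-1/2}XA^{-1/2})\notin\mathcal{K}(I)$ for all sufficiently small $t>0$, yielding the required failure.

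The main obstacle is the selection step producing $X\in\operatorname{int}\mathcal{K}(I)\setminus\mathcal{K}(A)$: one must argue carefully that two distinct closed solid pointed convex cones in a common vector space have symmetric differences that intersect the interior of at least one of them, which is a standard but slightly delicate convexity argument. The other nontrivial ingredient is that ruling out \emph{any} conal curve in the translated setting (rather than just the straight line) is precisely what Theorem \ref{geodesic conal curves thm} provides, by collapsing the global order relation into a single spectral condition at $\mathcal{K}(I)$.
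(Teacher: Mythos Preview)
Your argument has a genuine gap: you set $C=A-I$ (or $I-A$ after swapping), but nothing guarantees this matrix is positive semidefinite, and without that $T_C$ is not a map from $S^+_n$ into itself as the proposition requires. Your specific points $\Sigma_1,\Sigma_2$ and their translates do land in $S^+_n$, but $T_C$ as a self-map of $S^+_n$ may fail to exist; for instance $T_C(\epsilon I)=\epsilon I + A - I$ need not be positive definite for small $\epsilon>0$.

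The repair is precisely the observation on which the paper's own proof hinges: for an affine-invariant cone field one has $\mathcal{K}(\lambda A)=\lambda\,A^{1/2}\mathcal{K}(I)A^{1/2}=\mathcal{K}(A)$ as subsets of $\mathfrak{m}$, since cones are closed under positive scaling. Hence you may replace $A$ by $\lambda A$ with $\lambda$ large enough that $C=\lambda A-I$ is positive definite, without disturbing the key exclusion $X\notin\mathcal{K}(\lambda A)=\mathcal{K}(A)$. After this adjustment your argument goes through (you then need $t$ small relative to the now-fixed $\lambda$ in the Taylor step for $\log(I+t(\lambda A)^{-1/2}X(\lambda A)^{-1/2})$, which is harmless).

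Apart from this omission your route differs from the paper's. Rather than constructing an explicit ordered pair and invoking Theorem~\ref{geodesic conal curves thm} to rule out \emph{all} conal curves between the translates, the paper simply notes that the failure of $dT_C|_{\Sigma_1}$ to carry $\mathcal{K}(\Sigma_1)$ into $\mathcal{K}(T_C(\Sigma_1))$ already violates differential positivity, and then appeals to the equivalence between monotonicity and differential positivity stated in Section~\ref{section 4}. Your version is more self-contained and makes the failure of the order explicit; the paper's is shorter because it leans on that equivalence.
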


\begin{proof}If $\mathcal{K}$ is not translation invariant, then there exist $\Sigma_1,\Sigma_2\in S^+_n$ such that $dT_{(\Sigma_2-\Sigma_1)}\vert_{\Sigma_1}\mathcal{K}(\Sigma_1)\neq\mathcal{K}(\Sigma_2)$, where $T_{(\Sigma_2-\Sigma_1)}(\Sigma)=\Sigma+(\Sigma_2-\Sigma_1)$. Thus there exists some $\delta\Sigma$ in the cone at either $\Sigma_1$ or $\Sigma_2$ that cannot be identified with an element of the cone at the other point under translation. Without loss of generality, assume that $\delta\Sigma\in \mathcal{K}(\Sigma_1)$ and $dT_{(\Sigma_2-\Sigma_1)}\big\vert_{\Sigma_1}(\delta\Sigma)\notin\mathcal{K}(\Sigma_2)$. For an affine-invariant cone field $\mathcal{K}$, we have 
\begin{equation}
\mathcal{K}(\lambda\Sigma)=d\tau_{\lambda^{1/2}I}\big\vert_{\Sigma}\mathcal{K}(\Sigma)=dS_{\lambda}\big\vert_{\Sigma}\mathcal{K}(\Sigma)=\lambda\mathcal{K}(\Sigma)=\mathcal{K}(\Sigma)
\end{equation}
for any $\lambda>0$ and $\Sigma\in S^+_n$. That is, the cone field is translationally invariant along each ray $\gamma(t)=t\Sigma$, $t>0$. Thus, we can identify $\mathcal{K}(\Sigma_2)$ through translation with any cone $\mathcal{K}(\lambda\Sigma_2)$ where $\lambda>0$. It follows that $dT_{(\lambda\Sigma_2-\Sigma_1)}\big\vert_{\Sigma_1}(\delta\Sigma) \notin\mathcal{K}(\lambda\Sigma_2)$ for any $\lambda>0$. For sufficiently large $\lambda>0$, $C:=\lambda\Sigma_2-\Sigma_1$ is a positive definite matrix. Therefore, $T_C:S^+_n\to S^+_n$ is not differentially positive with respect to $\mathcal{K}$ and hence is not monotone with respect to $\leq_{\mathcal{K}}$.
\end{proof}

\section{Invariant half-spaces}  \label{half spaces}

\subsection{An affine-invariant half-space preorder}

The $\operatorname{Ad}_{O(n)}$-invariant condition $\tr(X)\geq 0$ on $T_{I}S^+_n$ in (\ref{quad}) picks out a pointed cone from the double cone defined by the non-negativity of the quadratic form $(\tr(X))^2-\mu\tr(X^2)$. Indeed, $\tr(X)\geq 0$ defines a half-space in $T_{I}S^+_n$ bounded by the hyperplane $\tr(X)=0$ in $T_{I}S^+_n$. The affine-invariant extension of this hyperplane to all of $S^+_n$ yields a distribution of rank $\dim S^+_n-1 = n(n+1)/2-1$ on $S^+_n$ given by $\tr(\Sigma^{-1/2}X\Sigma^{-1/2})=\tr(\Sigma^{-1}X)=0$ for $X\in T_{\Sigma}S^+_n$. The corresponding affine-invariant half-space field $\mathcal{H}_{\Sigma}$ on the tangent bundle $TS^+_n$ simply takes the form
\begin{equation} \label{preorder}
\mathcal{H}_{\Sigma}=\{X\in T_{\Sigma}S^+_n:\tr(\Sigma^{-1}X)\geq 0\}.
\end{equation}
A half-space field of this form induces a partial preorder $\preceq_{\mathcal{H}}$ on $S^+_n$. That is, a binary relation that is reflexive and transitive. The antisymmetry condition required for a preorder to be a partial order does not hold since $\mathcal{H}_{\Sigma}$ is not a pointed cone. Nonetheless, one can ask whether any two given matrices $\Sigma_1,\Sigma_2\in S^+_n$ satisfy $\Sigma_1\preceq_{\mathcal{H}}\Sigma_2$, or if a given function on $S^+_n$ is monotone with respect to the preorder induced by (\ref{preorder}). The monotonicity of a function with respect to a preorder still gives geometric insight into the effects of the function on the space on which it acts and the discrete-time dynamics defined by its iterations.

To illustrate this we return to a puzzling aspect concerning the monotonicity of the function $f_r(x)=x^r$ on the real line for $r>0$ and its analogue result for positive semidefinite matrices. Namely, that the map $f_r$ is monotone on $S^+_n$ with respect to an affine-invariant partial order if $r\in [0,1]$ but is not monotone on $S^+_n$ for $r>1$. We will show that the monotonicity on the real line for $r>0$ is inherited in the matrix function setting in the form of a one-dimensional monotonicity expressed as the preservation of the affine-invariant half-space preorder for any $r>0$. 

\begin{proposition}\label{preorder prop}
The function $f_r:\Sigma \mapsto \Sigma^r$ is monotone on $S^+_n$ with respect to the affine-invariant half-space preorder $\preceq_{\mathcal{H}}$  for any $r>0$.
\end{proposition}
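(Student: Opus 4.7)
The plan is to verify that $f_r$ is differentially positive with respect to the affine-invariant half-space field $\mathcal{H}$; the argument reviewed for cone-induced orders in Section~\ref{section 4} carries over verbatim to preorders induced by half-space fields, so differential positivity of $f_r$ will imply monotonicity with respect to $\preceq_{\mathcal{H}}$. Concretely, I need to show that whenever $X\in T_\Sigma S^+_n$ satisfies $\tr(\Sigma^{-1}X)\geq 0$, the image $df_r|_\Sigma X$ satisfies $\tr\bigl(f_r(\Sigma)^{-1}\,df_r|_\Sigma X\bigr)\geq 0$.

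The main observation is that the half-space quantity $\tr\bigl(f_r(\Sigma)^{-1}\,df_r|_\Sigma X\bigr)$ is precisely the logarithmic derivative of $\det\circ f_r$ along $X$, so the problem collapses to a one-dimensional scalar computation rather than a matrix inequality. First I would apply Jacobi's formula~(\ref{Jacobi}) to $\det$ at $f_r(\Sigma)$, combined with the chain rule, to obtain
\[
d(\det\circ f_r)|_\Sigma X \;=\; \det\bigl(f_r(\Sigma)\bigr)\,\tr\!\bigl(f_r(\Sigma)^{-1}\,df_r|_\Sigma X\bigr).
\]
Next I would compute the same differential by exploiting the spectral identity $\det(\Sigma^r)=(\det\Sigma)^r$ (immediate from the eigendecomposition of $\Sigma$) together with a second application of Jacobi's formula at $\Sigma$:
\[
d(\det\circ f_r)|_\Sigma X \;=\; r(\det\Sigma)^{r-1}\cdot\det(\Sigma)\,\tr(\Sigma^{-1}X) \;=\; r\,\det\bigl(f_r(\Sigma)\bigr)\,\tr(\Sigma^{-1}X).
\]
Equating the two expressions and dividing through by the positive quantity $\det(f_r(\Sigma))$ would yield the clean identity
\[
\tr\!\bigl(f_r(\Sigma)^{-1}\,df_r|_\Sigma X\bigr) \;=\; r\,\tr(\Sigma^{-1}X),
\]
valid for every $r>0$, every $\Sigma\in S^+_n$, and every $X\in T_\Sigma S^+_n$. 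Since $r>0$, this immediately gives $df_r|_\Sigma \mathcal{H}_\Sigma\subseteq \mathcal{H}_{f_r(\Sigma)}$, completing the verification of differential positivity.

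There is essentially no real obstacle here: unlike the pointed-cone situation of Theorem~\ref{GenLownerHeinz}, where the quadratic constraint brings in the term $\tr(\Sigma^{-1}X\Sigma^{-1}X)$ whose behaviour under $df_r$ required the delicate trace inequalities of Lemma~\ref{inequalitylemma} and Proposition~\ref{trace prop}, the half-space condition depends only on the single scalar $\tr(\Sigma^{-1}X)$. That scalar is simply rescaled by the factor $r$ under $df_r|_\Sigma$, which is precisely why half-space monotonicity extends to all $r>0$ while the stronger pointed-cone monotonicity is confined to $r\in[0,1]$. The only care needed is to record that the differential-positivity-implies-monotonicity argument of Section~\ref{section 4} works equally well for preorders, since it relies only on the concatenation of conal curves rather than on antisymmetry.
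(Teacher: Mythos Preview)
Your argument is correct, but it follows a different route from the paper's primary proof. The paper first establishes the identity $\tr\bigl(f_{q/p}(\Sigma)^{-1}\,df_{q/p}\vert_{\Sigma}X\bigr)=\tfrac{q}{p}\,\tr(\Sigma^{-1}X)$ for rational exponents by manipulating the generalized Sylvester equation (\ref{gen syl}) for $df_{1/p}$ and the explicit polynomial form of $df_q$, and only then passes to arbitrary $r>0$ by density of the rationals. Your approach bypasses both the Sylvester equation and the density argument entirely: by recognizing $\tr\bigl(f_r(\Sigma)^{-1}\,df_r\vert_{\Sigma}X\bigr)$ as the logarithmic derivative of $\det\circ f_r$ and invoking $\det(\Sigma^r)=(\det\Sigma)^r$ together with Jacobi's formula, you obtain the identity $\tr\bigl(f_r(\Sigma)^{-1}\,df_r\vert_{\Sigma}X\bigr)=r\,\tr(\Sigma^{-1}X)$ directly for every real $r>0$. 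This is shorter and more conceptual; indeed the paper itself remarks, immediately after its proof, that the result can alternatively be read off from the determinant characterization of $\preceq_{\mathcal{H}}$ developed in Section~\ref{partial orders}, and your computation is precisely the infinitesimal version of that remark. What the paper's longer argument buys is self-containment: it does not appeal to the determinant foliation or to Jacobi's formula, only to the algebra of $df_r$ already set up for Theorem~\ref{GenLownerHeinz}.
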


\begin{proof}
Let $p,q\in \mathbb{N}$ be positive integers. The map $f_{q/p}:\Sigma\mapsto \Sigma^{q/p}$ can be written as the composition $f_{1/p}\circ f_q$ with differential
\begin{equation}\label{rational composition}
df_{q/p}\vert_{\Sigma}=df_{1/p}\vert_{f_q(\Sigma)}\circ df_{q}\vert_{\Sigma}.
\end{equation}
Now since $df_q\vert_{\Sigma}$ is given by
\begin{equation}  
df_q\vert_{\Sigma}X=\sum_{j=0}^{q-1}\Sigma^{q-1-j}X\Sigma^j, \quad (X\in T_{\Sigma}S^+_n)
\end{equation}
and $df_{1/p}\vert_{\Sigma}$ is the unique solution of the generalized Sylvester equation (\ref{gen syl}), the differential $df_{q/p}\vert_{\Sigma}$ in (\ref{rational composition}) must satisfy
\begin{equation}
\sum_{i=0}^{p-1}(\Sigma^{q/p})^{p-1-i}(df_{q/p}\vert_{\Sigma}X)(\Sigma^{q/p})^i =
\sum_{j=0}^{q-1}\Sigma^{q-1-j}X\Sigma^j.
\end{equation}
Multiplying both sides of this equation by $\Sigma^{-q}$ and taking the trace of the resulting equation yields
\begin{align}
\tr\left(\sum_{i=0}^{p-1}(\Sigma^{q/p})^{-1-i}(df_{q/p}\vert_{\Sigma}X)(\Sigma^{q/p})^i\right) &=
\tr\left(\sum_{j=0}^{q-1}\Sigma^{-1-j}X\Sigma^{j}\right) \\
\implies \quad \tr\left(\sum_{i=0}^{p-1}\Sigma^{-q/p}(df_{q/p}\vert_{\Sigma}X)\right)&=\tr\left(\sum_{j=0}^{q-1}\Sigma^{-1}X\right) \\
\implies \quad p\tr\left(\Sigma^{-q/p}(df_{q/p}\vert_{\Sigma}X)\right) &= q \tr(\Sigma^{-1}X).
\end{align}
That is, $\tr\left((f_{q/p}(\Sigma))^{-1}df_{q/p}\vert_{\Sigma}X\right)=\frac{q}{p}\tr(\Sigma^{-1}X)$ for all $X\in T_{\Sigma}S^+_n$. A standard argument based on the density of positive rational numbers in the positive real line $\mathbb{R}_+$ gives 
\begin{equation}
\tr\left((f_{r}(\Sigma))^{-1}df_{r}\vert_{\Sigma}X\right)=r\tr(\Sigma^{-1}X)
\end{equation}
 for any real $r>0$. Therefore, we clearly have the implication
 \begin{equation}
 X\in \mathcal{H}_{\Sigma} \implies df_{r}\vert_{\Sigma}X\in \mathcal{H}_{f_r(\Sigma)}
 \end{equation}
 for all $X\in T_{\Sigma}S^+_n$, which is precisely the local characterization of the monotonicity of $f_r$ with respect to the preorder induced by $\mathcal{H}_{\Sigma}$.
\end{proof}
This result further highlights the natural connection between affine-invariance of causal structures on $S^+_n$ and monotonicity of the matrix power functions $f_r(\Sigma)=\Sigma^r$. In particular, $f_r$ is generally not monotone with respect to a preorder induced by a half-space field that is translation-invariant.

It should be noted that although the above proof has the virtue of being self-contained, Proposition \ref{preorder prop} can also be proven using results from Section \ref{partial orders}. Specifically, it should be clear from the material from that section that $\Sigma_1\preceq_{\mathcal{H}}\Sigma_2$ if and only if $\det\Sigma_1\preceq_{\mathcal{H}}\det\Sigma_2$, whence $f_r:\Sigma\mapsto\Sigma^r$ preserves $\preceq_{\mathcal{H}}$ precisely when
\begin{equation}
\det\Sigma_1\leq\det\Sigma_2 \implies \det f_r(\Sigma_1)\leq\det f_r(\Sigma_2).
\end{equation}
Since $\det f_r(\Sigma)=\det\Sigma^r=r(\det\Sigma)$, this is clearly the case for any $r>0$.

It is instructive to return to the $n=2$ case to obtain a visualization of the rank $2$ distribution $\mathcal{D}_{\Sigma}=\partial\mathcal{H}$ that defines the affine-invariant preorder induced by $\mathcal{H}_{\Sigma}$. As noted in Section \ref{visualization}, the set $S^+_2$ can be identified with the interior of the quadratic cone $K$ in $\mathbb{R}^3$ given by $z^2-x^2-y^2\geq 0$, $z\geq 0$ via a bijection $\phi:\Sigma\mapsto (x,y,z)$. 
At $\Sigma = \phi^{-1}(x,y,z)\in S^+_2$, the inequality $\operatorname{tr}(\Sigma^{-1}X)\geq 0$ takes the form $\ z\delta z - x \delta x - y \delta y \geq 0$, where $(\delta x, \delta y, \delta z)\in T_{(x,y,z)}K$ as shown in (\ref{halfspace}). The distribution $\partial\mathcal{H}$ that consists of the hyperplanes which form the boundary of the half-space field $\mathcal{H}_{\Sigma}$ are given by $\ z\delta z - x \delta x - y \delta y = 0$. This distribution is clearly integrable with integral submanifolds of the form
$
z^2-x^2 - y^2 = C$,
where $C\geq 0$ is a constant for each of the integral submanifolds, which form hyperboloids of revolution as shown in Figure \ref{fig:3}. As expected, these surfaces coincide with the submanifolds of constant determinant predicted in Section \ref{partial orders}.

\begin{figure}
\centering
\includegraphics[width=0.8\linewidth]{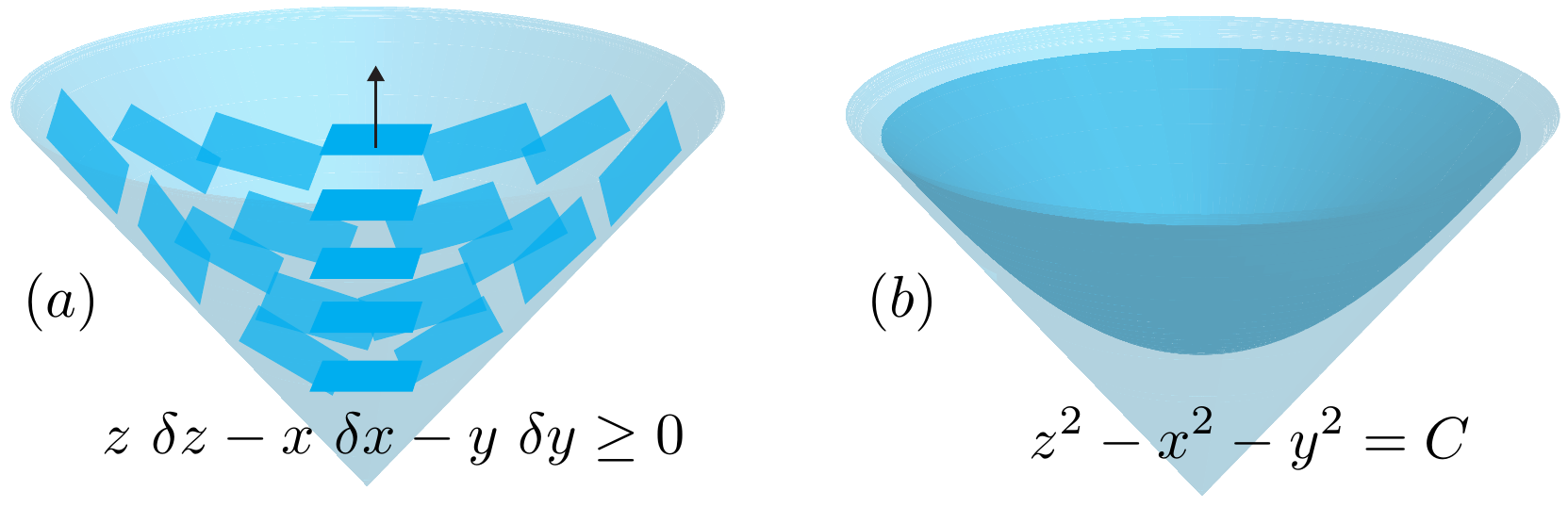}
  \caption{$(a)$ An illustration of the affine-invariant hyperplanes $\partial\mathcal{H}$ corresponding to $\tr(\Sigma^{-1}X) = 0$ against the backdrop of the cone $K=\{(x,y,z)\in \mathbb{R}^3:z^2-x^2-y^2\geq 0, z\geq 0\}$ identified with $S^+_2$. $(b)$ The distributions integrate to give a family of hyperboloids of revolution parametrized by $C > 0$.  The limiting case $C=0$ yields the boundary of the cone $K$. 
   }
  \label{fig:3}
\end{figure}

\subsection{The Toda and QR flows}

The \emph{Toda} flow is a well-know Hamiltonian dynamical system on the space of real symmetric matrices of fixed dimension $n$, which can be expressed in the Lax pair form
\begin{equation} \label{Lax}
\dot{X}(t)=[X,\pi_s(X)] = X\pi_s(X)-\pi_s(X)X,
\end{equation}
where $\pi_s(X)$ is the skew-symmetric matrix
$\pi_s(X)=
X_{ij}$ if $i>j$,   
$\pi_s(X)=0$ if $i=j$, and
$\pi_s(X)=-X_{ji}$ if $i<j$.
The QR-flow is a related dynamical system on $S^+_n$ that has close connections to the QR algorithm and is given by
\begin{equation}
\dot{\Sigma}(t)=[\Sigma,\pi_s(\log\Sigma)].
\end{equation}
The Lax pair formulations of the Toda and QR-flows show that these flows are \emph{isospectral}. That is, the eigenvalues of $X(t)$ and $\Sigma(t)$ are independent of $t$. Isospectral flows clearly preserve all translation invariant orders that possess spectral characterizations.

In \cite{Lagarias1991}, the following theorem is established for the projected Toda and QR flows. The projected flows refer to projections of the flows to the $r\times r$ upper left corner principal submatrices of $X(t)$ and $\Sigma(t)$, i.e., the flows of $X_r(t)=E_r^TX(t)E_r$ and $\Sigma_r(t)=E_r^T\Sigma(t)E_r$, where $E_r^T=
[I_r \; 0]$.

\begin{theorem}
For $1\leq r \leq n$ and any symmetric matrix $X(0)$ and symmetric positive definite matrix $\Sigma(0)$, the ordered eigenvalues of the projected Toda flow orbit $X_r(t)=E_r^TX(t)E_r$ and the projected QR flow orbit $\Sigma_r(t)=E_r^T\Sigma(t)E_r$ are nondecreasing functions of $t$.
\end{theorem}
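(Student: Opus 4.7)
My plan is to identify the projected Toda and QR flows with continuous-time subspace iteration on the Grassmannian $\operatorname{Gr}(r,n)$, and then deduce monotonicity of each ordered eigenvalue from a Rellich--Kato perturbation computation on the moving subspace. Because the two flows share the same Lax-pair structure I would treat them in parallel.

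The first step is the classical isospectral identification. From the Lax pair $\dot X = [X, \pi_s(X)]$, writing $X(t) = O(t)^T X(0) O(t)$ with $O \in O(n)$ gives $\dot O = O\, \pi_s(X(t))$ with $O(0) = I$. The Symes--Deift--Nanda--Tomei theorem identifies this $O(t)$ with the orthogonal factor of the QR decomposition $e^{tX(0)} = O(t) R(t)$, with $R(t)$ upper triangular of positive diagonal. Since $R(t)$ preserves the standard flag $V_r^0 := \operatorname{span}(e_1,\ldots,e_r)$, one obtains the subspace identity $O(t) V_r^0 = e^{tX(0)} V_r^0$ as points of $\operatorname{Gr}(r,n)$. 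Combined with $X_r(t) = (O(t)E_r)^T X(0) (O(t)E_r)$, this yields the pivotal reformulation
\begin{equation*}
\lambda_k(X_r(t)) \;=\; \lambda_k\!\left(X(0)\big|_{V(t)}\right), \qquad V(t) := e^{tX(0)} V_r^0.
\end{equation*}
The identical Lax argument applied to $\dot\Sigma = [\Sigma, \pi_s(\log\Sigma)]$ with $X_0 := \log\Sigma(0)$ yields $\lambda_k(\Sigma_r(t)) = \lambda_k(\Sigma(0)|_{V(t)})$, where now $V(t) = e^{tX_0} V_r^0 = \Sigma(0)^t V_r^0$.

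The second and central step is to show that each ordered eigenvalue of the restricted operator on $V(t)$ is nondecreasing. Let $P(t)$ denote the orthogonal projection onto $V(t)$. The subspace moves with velocity field $v \mapsto X_0 v$, so differentiating $P(t) v(t) = v(t)$ along any curve $v(t) = e^{t X_0} \tilde u \in V(t)$ gives $\dot P\, w = (I - P) X_0\, w$ for every $w \in V(t)$. Standard Rellich--Kato perturbation theory then yields, for a simple eigenvalue $\lambda$ of the restricted operator $P Z P$ with unit eigenvector $w \in V(t)$ (where $Z = X_0$ for Toda and $Z = \Sigma(0)$ for QR),
\begin{equation*}
\dot\lambda \;=\; 2\,\langle \dot P\, w,\, Z w\rangle \;=\; 2\,\langle (I - P) X_0 w,\, (I - P) Z w\rangle,
\end{equation*}
after using $(I-P) X_0 w \perp P Z w$. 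For the Toda flow $Z = X_0$, and this collapses to $2\,\|(I-P) X_0 w\|^2 \geq 0$. For the QR flow the operators $X_0 = \log\Sigma(0)$ and $Z = \Sigma(0)$ commute and share an eigenbasis in which the eigenvalues are related by the increasing function $\exp$; expanding $w = \sum c_i u_i$ in this basis and using that $P w = w$ forces $c \in \ker G$ for the Gram matrix $G_{ij} := \langle (I - P) u_i, (I - P) u_j\rangle$, one reduces the inner product to a covariance-like expression of the co-monotone sequences $(\log\sigma_i)$ and $(\sigma_i)$ against the weights $c_i^2$, which is nonnegative by Chebyshev's sum inequality.

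The main obstacle is not the derivative computation itself, which is elementary, but rather the handling of the non-generic instants where two ordered eigenvalues of the restricted operator collide. At such points the individual eigenvalue branches are only Lipschitz and the perturbation formula does not apply directly to $\lambda_k(t)$ as a labeled function. On the open dense set of times where the restricted spectrum is simple the calculation is rigorous, and the monotonicity of each ordered eigenvalue $\lambda_k(t)$ then extends across crossings by continuity, since a continuous function that is nondecreasing on a dense subset of its domain is nondecreasing everywhere.
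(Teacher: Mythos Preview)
The paper does not prove this theorem: it is quoted from \cite{Lagarias1991} and used only as input to the corollary that follows. So there is no proof here to compare against, and your proposal must stand on its own.

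Your overall strategy is correct and attractive. The Symes/Deift--Nanda--Tomei identification of $O(t)$ with the orthogonal QR factor of $e^{tX_0}$, the Grassmannian identity $O(t)V_r^0=e^{tX_0}V_r^0$, and the resulting formula $\lambda_k(X_r(t))=\lambda_k(X_0|_{V(t)})$ are all right, and the derivative computation $\dot\lambda=2\|(I-P)X_0 w\|^2\ge0$ for the Toda case is clean. Two points deserve tightening.

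\emph{The QR covariance.} Your route through ``$c\in\ker G$'' is not what produces the covariance. What you actually need is the eigenvector condition $PZw=\lambda w$, which gives $\lambda=\langle Zw,w\rangle=\sum_i c_i^2\sigma_i$; then
\[
\langle (I-P)Zw,(I-P)X_0w\rangle
=\langle Zw,X_0w\rangle-\lambda\,\langle w,X_0w\rangle
=\sum_i c_i^2\,\sigma_i\log\sigma_i-\Bigl(\sum_i c_i^2\sigma_i\Bigr)\Bigl(\sum_i c_i^2\log\sigma_i\Bigr)\ge0
\]
is precisely the Chebyshev covariance of the comonotone sequences $(\sigma_i)$ and $(\log\sigma_i)$ with weights $c_i^2$. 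The $\ker G$ condition encodes only $w\in V$, not the eigenvector property, and by itself does not yield this reduction.

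\emph{Eigenvalue crossings.} The argument ``$\dot\lambda_k\ge0$ on the open dense simple-spectrum set, then extend by continuity'' has a real gap: the simple-spectrum set need not be dense, or even nonempty (take $X(0)=I$, $r\ge2$), and in general a continuous function with nonnegative derivative on an open dense set need not be nondecreasing. The robust repair is to observe that $t\mapsto X_r(t)$ is real-analytic, invoke Rellich's theorem to obtain real-analytic eigenvalue branches $\mu_1(t),\dots,\mu_r(t)$ with analytic unit eigenvectors, apply your derivative formula to each branch at \emph{every} $t$, and conclude that each $\mu_j$ is nondecreasing. The ordered eigenvalues, being pointwise sortings of nondecreasing functions, are then nondecreasing as well.
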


\begin{corollary}
Let $f(x)$ be any nondecreasing real-valued function and $\alpha>0$. Then $F(t)=\tr(f(E_r^TX(t)E_r))$ and $G(t)=\tr(f(E_r^T\Sigma(t)^{\alpha}E_r))$ are nondecreasing functions of $t$ for $t\in \mathbb{R}$.
\end{corollary}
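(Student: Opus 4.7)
The plan is to deduce both statements directly from the preceding theorem by expressing the trace of $f$ on the projected matrix as a sum of compositions of nondecreasing functions, with the key additional observation that $\Sigma(t)^{\alpha}$ itself evolves (up to a positive rescaling of time) under the QR flow, which lets us apply the theorem a second time to $\Sigma^{\alpha}$ in place of $\Sigma$.

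For $F(t)$, I would first note that $X_r(t)=E_r^T X(t) E_r$ is a real symmetric $r\times r$ matrix, so by the spectral theorem and functional calculus
\begin{equation}
F(t)=\tr\bigl(f(X_r(t))\bigr)=\sum_{i=1}^r f\bigl(\lambda_i(t)\bigr),
\end{equation}
where $\lambda_1(t)\leq\cdots\leq\lambda_r(t)$ are the ordered eigenvalues of $X_r(t)$. The preceding theorem asserts that each $\lambda_i(t)$ is nondecreasing in $t$, and since $f$ is nondecreasing, each $f(\lambda_i(t))$ is also nondecreasing. Therefore $F(t)$ is a finite sum of nondecreasing functions and is itself nondecreasing.

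For $G(t)$, the plan is to exhibit $Y(t):=\Sigma(t)^{\alpha}$ as a QR flow after rescaling time. Because the QR flow is a Lax-pair flow, its solution has the form $\Sigma(t)=U(t)\Sigma(0)U(t)^T$ with $U(t)$ orthogonal and $\dot U=-\pi_s(\log\Sigma)\,U$. Consequently $Y(t)=U(t)\Sigma(0)^{\alpha}U(t)^T$, and a direct computation gives $\dot Y(t)=[Y(t),\pi_s(\log\Sigma(t))]$. Using linearity of $\pi_s$ and the identity $\log\Sigma=\alpha^{-1}\log Y$, this becomes
\begin{equation}
\dot Y(t)=\tfrac{1}{\alpha}\bigl[Y(t),\pi_s(\log Y(t))\bigr],
\end{equation}
so the change of variable $\tau=t/\alpha$ (well-defined since $\alpha>0$) shows that $Y$ satisfies the QR flow in the variable $\tau$. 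Applying the preceding theorem to $Y$, the ordered eigenvalues $\mu_1(t)\leq\cdots\leq\mu_r(t)$ of $E_r^T Y(t) E_r=E_r^T\Sigma(t)^{\alpha}E_r$ are nondecreasing in $\tau$, hence nondecreasing in $t$. Writing $G(t)=\sum_{i=1}^r f(\mu_i(t))$ and invoking the monotonicity of $f$ as in the argument for $F$ concludes the proof.

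The main obstacle is the second step: realising that $E_r^T\Sigma(t)^{\alpha}E_r$ is not simply a function of the upper-left block $\Sigma_r(t)$ (so a naive ``apply $x\mapsto x^{\alpha}$'' argument would be incorrect), and instead that the \emph{whole} matrix $\Sigma(t)^{\alpha}$ is itself, up to the positive time rescaling $\tau=t/\alpha$, a QR flow trajectory. Once this is noted, the projected theorem applies to $\Sigma^{\alpha}$ and the remainder of the argument is routine.
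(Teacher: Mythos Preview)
Your argument is correct. The paper itself does not prove this corollary; both the theorem and the corollary are quoted from \cite{Lagarias1991} without proof, and the paper only offers a brief geometric interpretation afterward. So there is no ``paper's own proof'' to compare against.

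On the substance: your treatment of $F(t)$ is the natural one. For $G(t)$, the key point you correctly identify is that $E_r^T\Sigma(t)^{\alpha}E_r$ is \emph{not} a function of $\Sigma_r(t)$, so one cannot simply compose $f$ with $x\mapsto x^{\alpha}$ and invoke the theorem for $\Sigma_r(t)$. Your resolution---showing that $Y(t)=\Sigma(t)^{\alpha}$ is itself a QR-flow trajectory after the positive time rescaling $\tau=t/\alpha$---is exactly right: the Lax-pair structure gives $\Sigma(t)=U(t)\Sigma(0)U(t)^T$ with $U$ orthogonal, hence $Y(t)=U(t)\Sigma(0)^{\alpha}U(t)^T$, and the computation $\dot Y=[Y,\pi_s(\log\Sigma)]=\alpha^{-1}[Y,\pi_s(\log Y)]$ follows from skew-symmetry of $\pi_s(\cdot)$ and linearity. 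The theorem then applies to $Y$ in place of $\Sigma$, and the rest is routine.
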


The geometric interpretation of the above corollary is that the generalized projected Toda and QR flows, $f(X_r(t))$ and $f(\Sigma_r(t))$, respectively, preserve the half-space preorder induced by the translation invariant half-space $\tr(X)\geq 0$. This is clear by noting that if $X(0),\hat{X}(0)$ are symmetric matrices such that $\tr(X(0)-\hat{X}(0))\geq 0$, then
\begin{equation}
\tr(f(X_r(t))-f(\hat{X}_r(t))) \geq \tr(X(0)-\hat{X}(0)) \geq 0, \quad \forall t>0,
\end{equation}
and similarly for the generalized projected QR flow.

\section{Matrix means}   \label{matrix means}

Notions of means and averaging operations on matrices are of great interest in matrix analysis and operator theory with numerous applications to fields such as radar data processing, medical imaging, statistics and machine learning. Adapting basic properties of means on the positive real line to the setting of positive definite matrices, we may define a matrix mean to be a continuous map $M:S^+_n\times S^+_n\rightarrow S^+_n$ that satisfies the following properties \\

\begin{enumerate}
\item  $M(\Sigma_1,\Sigma_2)=M(\Sigma_2,\Sigma_1)$ 
\item $\Sigma_1 \leq \Sigma_2  \implies \Sigma_1 \leq M(\Sigma_1,\Sigma_2) \leq \Sigma_2$
\item $M(A^T\Sigma_1 A, A^T\Sigma_2 A) = A^T M(\Sigma_1,\Sigma_2)A$, for all $A\in GL(n).$
\item $M(\Sigma_1,\Sigma_2)$ is monotone in $\Sigma_1$ and $\Sigma_2$. \\
\end{enumerate} 
In the existing literature on matrix means, the partial order $\leq$ in the above definition refers to the L\"owner order $\leq_{L}$. It is a nontrivial question whether a given map $M:S^+_n\times S^+_n\rightarrow S^+_n$ defines a matrix mean with respect to any of the new partial orders considered in this paper. A particularly important matrix mean that has been the subject of considerable interest in recent years is the geometric mean $M(\Sigma_1,\Sigma_2)=\Sigma_1\#\Sigma_2$ defined by
\begin{equation}  \label{geometric mean}
\Sigma_1\#\Sigma_2 = \Sigma_1^{1/2}\left(\Sigma_1^{-1/2}\Sigma_2\Sigma_1^{-1/2}\right)^{1/2}\Sigma_1^{1/2}.
\end{equation}
The following theorem shows that the geometric mean and affine-invariant orders on $S^+_n$ are intimately connected.
\begin{theorem}
The geometric mean $\#$ (\ref{geometric mean}) defines a matrix mean for any affine-invariant order $\leq$ on $S^+_n$.
\end{theorem}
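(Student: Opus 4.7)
The plan is to verify in turn the four defining properties of a matrix mean. Properties $1$ (symmetry) and $3$ (congruence invariance) are classical algebraic identities for $\#$ that hold independently of the choice of order, and I would dispose of them first by direct computation together with the uniqueness of the positive definite square root applied to the two equivalent expressions
$\Sigma_1^{1/2}(\Sigma_1^{-1/2}\Sigma_2\Sigma_1^{-1/2})^{1/2}\Sigma_1^{1/2}$ and $\Sigma_2^{1/2}(\Sigma_2^{-1/2}\Sigma_1\Sigma_2^{-1/2})^{1/2}\Sigma_2^{1/2}$ for $\Sigma_1\#\Sigma_2$. The substance of the theorem lies in properties $2$ (betweenness) and $4$ (monotonicity).

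Betweenness falls out almost immediately from Theorem \ref{geodesic conal curves thm}. By direct inspection $\Sigma_1 \# \Sigma_2$ is the midpoint $\gamma(1/2)$ of the affine-invariant geodesic (\ref{geodesic from point to point}) joining $\Sigma_1$ and $\Sigma_2$. If $\Sigma_1 \leq_{\mathcal{K}} \Sigma_2$, then Theorem \ref{geodesic conal curves thm} guarantees that $\gamma$ is a conal curve, so its restrictions $\gamma|_{[0,1/2]}$ and $\gamma|_{[1/2,1]}$ (suitably reparametrized) are conal curves from $\Sigma_1$ to $\Sigma_1 \# \Sigma_2$ and from $\Sigma_1 \# \Sigma_2$ to $\Sigma_2$, respectively, yielding $\Sigma_1 \leq_{\mathcal{K}} \Sigma_1 \# \Sigma_2 \leq_{\mathcal{K}} \Sigma_2$.

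For monotonicity, by symmetry of $\#$ it suffices to fix $\Sigma_2$ and treat monotonicity in the first argument. Here I would use the factorization
$\Sigma_1 \# \Sigma_2 \;=\; \bigl(\tau_{\Sigma_2^{1/2}} \circ f_{1/2} \circ \tau_{\Sigma_2^{-1/2}}\bigr)(\Sigma_1),$
read off directly from (\ref{geometric mean}) after appealing to symmetry. Congruence transformations $\tau_A$ preserve every affine-invariant order by construction (as recorded in the subsection on scaling and congruence transformations in Section \ref{section 4}), and $f_{1/2}$ is monotone with respect to each affine-invariant partial order by Theorem \ref{GenLownerHeinz}. Composition of three monotone maps is monotone, which is the required statement.

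The principal obstacle is that Theorem \ref{GenLownerHeinz} is phrased only for the quadratic cone fields (\ref{quadField}), whereas the present statement quantifies over all affine-invariant orders. To close this gap I would rerun the differential-positivity argument of Section \ref{section 4} relying only on the spectral characterization of $\operatorname{Ad}_{O(n)}$-invariant cones furnished by Proposition \ref{spectral}: the key trace identity $\tr((f_{1/p}(\Sigma))^{-1}df_{1/p}\vert_\Sigma X) = \tfrac{1}{p}\tr(\Sigma^{-1}X)$ and the inequalities supplied by Lemmas \ref{inequalitylemma} and \ref{trace ineq 2} operate at the level of the eigenvalues of $\Sigma^{-1/2}(df_{1/p}\vert_\Sigma X)\Sigma^{-1/2}$, which is precisely the data controlled by any permutation-symmetric spectral cone. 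Verifying this spectral extension is the only non-routine step; the remainder of the proof is bookkeeping.
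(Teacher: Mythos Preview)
Your approach is essentially the same as the paper's: betweenness via Theorem~\ref{geodesic conal curves thm}, and monotonicity by factoring $\Sigma\mapsto\Sigma_1\#\Sigma$ as a composition of two congruences with $f_{1/2}$. Two small differences are worth flagging. First, the paper leans on the geometric characterization of $\Sigma_1\#\Sigma_2$ as the midpoint of the affine-invariant geodesic to get symmetry and congruence invariance essentially for free (the latter because congruences are isometries of the standard metric), whereas you propose direct algebraic verification; both routes are fine. Second, you are right to flag that Theorem~\ref{GenLownerHeinz} is stated only for the quadratic cone fields (\ref{quadField}) while the present theorem speaks of \emph{any} affine-invariant order: the paper simply asserts in its proof that ``$\Sigma\mapsto\Sigma^{1/2}$ is monotone for any affine-invariant order'' without further justification, so you have actually been more careful than the paper here. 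Your proposed fix---rerunning the differential-positivity argument at the spectral level---is plausible in spirit but is not obviously routine, since the proof of Theorem~\ref{GenLownerHeinz} controls only $\tr(\Sigma^{-1}X)$ and $\tr((\Sigma^{-1}X)^2)$, not the full spectrum of $\Sigma^{-1/2}(df_{1/p}\vert_\Sigma X)\Sigma^{-1/2}$; extending to a general permutation-symmetric spectral cone $\mathcal K_\Lambda$ would require additional work that neither you nor the paper supplies.
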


\begin{proof}
The geometric mean $\Sigma_1\#\Sigma_2$ of two points $\Sigma_1,\Sigma_2\in S^+_n$ is the midpoint of the geodesic joining $\Sigma_1$ and $\Sigma_2$ in $S^+_n$ endowed with the standard Riemannian metric $ds^2=\tr[(\Sigma^{-1}d\Sigma)^2]$ \cite{Bhatia}. This geometric interpretation immediately implies $\Sigma_1\#\Sigma_2=\Sigma_2\#\Sigma_1$. Furthermore, given any affine-invariant order $\leq_{\mathcal{K}}$ induced by an affine-invariant cone field $\mathcal{K}$ and a pair of matrices satisfying $\Sigma_1\leq_{\mathcal{K}}\Sigma_2$, the geodesic $\gamma:[0,1]\to S^+_n$ from $\Sigma_1$ to $\Sigma_2$ is a conal curve by Theorem \ref{geodesic conal curves thm}. Hence, the midpoint $\Sigma_1\#\Sigma_2$ of $\gamma$ clearly satisfies $\Sigma_1 \leq_{\mathcal{K}} \Sigma_1\#\Sigma_2 \leq_{\mathcal{K}} \Sigma_2$.
Since congruence transformations are isometries, for any $A\in GL(n)$ the geodesic connecting $A^T\Sigma_1 A$ to $A^T\Sigma_2 A$ is given by $\tilde{\gamma}(t)=A^T\gamma(t) A$. Thus, $(A^T\Sigma_1A)\#(A^T\Sigma_2A)=A^T( \Sigma_1\#\Sigma_2)A$. Finally, for fixed $\Sigma_1\in S^+_n$, the function $F(\Sigma)=\Sigma_1\#\Sigma$ is monotone with respect to any affine-invariant order since congruence transformations preserve affine-invariant orders and the function $\Sigma\mapsto\Sigma^{1/2}$ is monotone for any affine-invariant order. By symmetry, $\#$ is also monotone with respect to its first argument. That is, the four conditions that define a matrix mean are all satisfied by the geometric mean for any choice of affine-invariant order.
\end{proof}

\section{Conclusion}

The choice of partial order is a key part of studying monotonicity of functions that is often taken for granted. Invariant cone fields provide a geometric approach to systematically construct `natural' orders by connecting the geometry of the state space to the search for orders. Coupled with differential positivity, invariant cone fields provide an insightful and powerful method for studying monotonicity, as shown in the case of $S^+_n$. Future work can focus on exploring the applications of the new partial orders presented in this paper to the study of dynamical systems and convergence analysis of algorithms defined on matrices. It may also be fruitful to explore the implications of this work in convexity theory. New notions of partial orders mean new notions of convexity. In this context it may be natural to consider the concept of geodesic convexity on $S^+_n$ with respect to the Riemannian structure on $S^+_n$, as well as the usual notion of convexity on sets of matrices that is based on translational geometry.

\bibliographystyle{siamplain}
\bibliography{references}

\end{document}